\newcommand{\op}[1]{\operatorname{#1}}
\newcommand{\cone}{\operatorname{{Cone}}}
\DeclareFontFamily{U}{rsf}{}
\DeclareFontShape{U}{rsf}{m}{n}{
  <5> <6> rsfs5 <7> <8> <9> rsfs7 <10->  rsfs10}{}
\DeclareMathAlphabet{\mathscr}{U}{rsf}{m}{n}
\newtheorem{theorem}{Theorem} 
\newtheorem{lemma}[theorem]{Lemma}
\newtheorem{lemma-definition}[theorem]{Lemma-Definition}
\newtheorem{proposition}[theorem]{Proposition}
\newtheorem{assumption}[theorem]{Assumption}
\theoremstyle{definition}
\newtheorem{definition}[theorem]{Definition}
\newtheorem{example}[theorem]{Example}
\newtheorem{examples}[theorem]{Examples}
\theoremstyle{remark}
\newtheorem{remark}[theorem]{Remark}
\numberwithin{equation}{section}
\newcommand{\DD} {\mathbb{D}}
\newcommand{\NN} {\mathbb{N}}
\newcommand{\ZZ} {\mathbb{Z}}
\newcommand{\QQ} {\mathbb{Q}}
\newcommand{\RR} {\mathbb{R}}
\newcommand{\CC} {\mathbb{C}}
\newcommand{\HH} {\mathbb{H}}
\newcommand{\PP} {\mathbb{P}}
\renewcommand{\AA} {\mathbb{A}}
\newcommand {\shD} {\mathcal{D}}
\newcommand {\shF} {\mathcal{F}}
\newcommand {\shL} {\mathcal{L}}
\newcommand {\shM} {\mathcal{M}}
\newcommand {\shO} {\mathcal{O}}
\newcommand {\shT} {\mathcal{T}}
\newcommand {\shX} {\mathcal{X}}
\newcommand {\Blo} {\operatorname{Blo}}
\newcommand {\coker} {\operatorname{coker}}
\newcommand {\diag} {\operatorname{diag}}
\newcommand {\dlog} {\operatorname{dlog}}
\newcommand {\dual} {{\vee}}
\newcommand {\eps} {\varepsilon}
\newcommand {\Gr} {\operatorname{Gr}}
\newcommand {\Hom} {\operatorname{Hom}}
\newcommand {\hra} {\hookrightarrow}
\newcommand {\id} {\operatorname{id}}
\newcommand {\im} {\operatorname{im}}
\renewcommand {\ker } {\operatorname{ker}}
\newcommand {\len} {\operatorname{len}}
\newcommand {\llog} {{\operatorname{log}}}
\newcommand {\lra} {\longrightarrow}
\newcommand {\pr} {\operatorname{pr}}
\newcommand {\ra} {\to}
\newcommand {\rank} {\operatorname{rank}}
\newcommand {\Sing} {\operatorname{Sing}}
\newcommand {\Spec} {\operatorname{Spec}}
\newcommand {\vol} {\operatorname{vol}}
\newcommand {\T} {\shT}
\def\mydate{\ifcase\month \or January\or February\or March\or
April\or May\or June\or July\or August\or September\or October\or 
November\or December\fi \space\number\day,\space\number\year}
\newlength{\picwidth} \setlength{\picwidth}{.75\textwidth}
\newlength{\miniwidth} \setlength{\miniwidth}{.5\textwidth}
\newlength{\nanowidth} \setlength{\nanowidth}{.33\textwidth}
\newlength{\melowidth} \setlength{\melowidth}{.88\textwidth}
\newlength{\leftminiwidth} \setlength{\leftminiwidth}{.45\textwidth}
\newlength{\rightminiwidth} \setlength{\rightminiwidth}{.45\textwidth}
\newlength{\minipagewidth} \setlength{\minipagewidth}{.45\textwidth}
\begin{document}
\def\mapright#1{\smash{
 \mathop{\longrightarrow}\limits^{#1}}}
\def\mapleft#1{\smash{
 \mathop{\longleftarrow}\limits^{#1}}}
\def\exact#1#2#3{0\to#1\to#2\to#3\to0}
\def\mapup#1{\Big\uparrow
  \rlap{$\vcenter{\hbox{$\scriptstyle#1$}}$}}
\def\mapdown#1{\Big\downarrow
  \rlap{$\vcenter{\hbox{$\scriptstyle#1$}}$}}
\def\dual#1{{#1}^{\scriptscriptstyle \vee}}
\def\invlim{\mathop{\rm lim}\limits_{\longleftarrow}}
\def\rto{\raise.5ex\hbox{$\scriptscriptstyle ---\!\!\!>$}}

\input epsf.tex
\title
[Perverse Curves and Mirror Symmetry]
{Perverse Curves and Mirror Symmetry}

\begin{abstract}
This work establishes a subtle connection between mirror symmetry for Calabi-Yau threefolds and that of curves of higher genus.
The linking structure is what we call a perverse curve. 
We show how to obtain such from Calabi-Yau threefolds in the Batyrev mirror construction and prove that their Hodge diamonds are related by the mirror duality.
\end{abstract}

\author{Helge Ruddat}

\address{JGU Mainz, Institut f\"ur Mathematik, Staudingerweg 9, 55099 Mainz, Germany}
\email{ruddat@uni-mainz.de}

\maketitle
\setcounter{tocdepth}{1}
\bigskip


\section*{Introduction}
Evidence for mirror symmetry to apply to varieties of positive Kodaira dimension has been given in \cite{Sei08},\cite{KKOY09},\cite{Ef09},\cite{GKR12},\cite{AAK12}.
Gross, Katzarkov and the author suggest in \cite{GKR12} that the mirror dual of a curve $Z$ of genus $g\ge 2$ is a union of $3g-3$ projective lines that meet in $2g-2$ points such that exactly three components meet in each point. 
By means of a duality of Landau-Ginzburg models in \cite{GKR12}, this reducible curve $\check Z$ comes together with a perverse sheaf $\shF_{\check Z}$ of vanishing cycles supporting a cohomological mixed Hodge complex of sheaves. 
The perverse sheaf should be thought of as an analogue of the constant sheaf $\shF_Z=\ZZ_Z$ on $Z$ which supports the cohomological Hodge complex that computes the usual Hodge structure of $(Z,\shF_Z)$. 
It was shown that $(Z,\shF_Z)$ and $(\check Z,\shF_{\check Z})$ have dual Hodge diamonds in the context of a construction where $Z$ embeds as an ample divisor in a toric surface.
For the notion of a cohomological mixed Hodge complex, we refer to \cite[III, 8.1.6)]{DelTH}.
\begin{definition} 
A \emph{perverse curve} is a pair $(Z,\shF_Z)$ of a (possibly reducible) curve $Z$ with perverse sheaf $\shF_Z$ supporting a cohomological mixed Hodge complex. 
\end{definition}
There are higher dimensional analogues but this article focuses on curves. 
The baby example and basic building block of a perverse curve is the mirror dual of a pair of pants which is given by the singular locus of the union of coordinate hyperplanes in 
$\CC^3$ together with the sheaf of vanishing cycles for the function $xyz$ (the product of the coordinate functions). 
This local duality was argued via homological mirror symmetry in \cite{AAEKO11}, see also \cite{Sh10}. 
We expect that a Strominger-Yau-Zaslow version of mirror symmetry can be extended to higher genus curves fibering over a tropical base.
\begin{center}
\resizebox{0.8\textwidth}{!}{
\begin{picture}(0,0)%
\includegraphics{pairofpants.pstex}%
\end{picture}%
\setlength{\unitlength}{4144sp}%
\begingroup\makeatletter\ifx\SetFigFont\undefined%
\gdef\SetFigFont#1#2#3#4#5{%
  \reset@font\fontsize{#1}{#2pt}%
  \fontfamily{#3}\fontseries{#4}\fontshape{#5}%
  \selectfont}%
\fi\endgroup%
\begin{picture}(7532,1705)(1233,-4573)
\put(8027,-3684){\makebox(0,0)[lb]{\smash{{\SetFigFont{12}{14.4}{\familydefault}{\mddefault}{\updefault}{\color[rgb]{0,0,0}perverse}%
}}}}
\put(8027,-3909){\makebox(0,0)[lb]{\smash{{\SetFigFont{12}{14.4}{\familydefault}{\mddefault}{\updefault}{\color[rgb]{0,0,0}structure}%
}}}}
\end{picture}%

}
\end{center}

The main objective of this work is to relate mirror symmetry for varieties of general type, here in the form of perverse curves, to mirror symmetry of Calabi-Yau manifolds.
The critical locus of a Strominger-Yau-Zaslow (SYZ) fibration of a Calabi-Yau threefold shows a striking similarity to a perverse curve: not only is it a union of irreducible curves meeting in triple points, at least locally, the sheaf of vanishing cycles of the SYZ map supported on the critical locus gives a perverse sheaf of the same type as that appearing for a mirror dual of a curve of higher genus, cf. \cite{Gr01}, \cite{WR03}, \cite{GS03}, \cite{AAK12}.
There are two problems though. Firstly, a global definition of this perverse sheaf seems elusive because of monodromy in the SYZ fibration. Secondly, since the SYZ map is not holomorphic, we lack the structure of a cohomological mixed Hodge complex.

We show how to solve both of these problems in the presence of a two-dimensional linear system of reduced Calabi-Yau hypersurfaces in a normal ambient fourfold $\PP$ with the property that there are generators $X_0,X_1,X_2$ with $X_0$ simple normal crossing, $X_1,X_2$ smooth and such that the intersection of any subset of $\{X_0,X_1,X_2\}$ is simple normal crossing of the expected dimension. 
In particular, the Batyrev construction for a Calabi-Yau threefold gives rise to such a setup \cite{Ba94} by defining $X_0$ to be the complement of the open torus in a maximal projective crepant partial (MPCP) resolution $\PP$ of a toric Fano fourfold and $X_1,X_2$ as general hypersurfaces linearly equivalent to $X_0$.
We denote by $\shX\subset \PP\times\PP^1$ the pencil generated by $X_0,X_1$ and by $\shX'$ the one generated by $X_0,X_2$. 
Consider the intersection $\shD=(X_1\times\PP^1)\cap_{\PP\times\PP^1}\shX'$.
The induced map $\shD\ra\PP^1$ gives a degenerating family of surfaces near the origin. The central fibre $D_0=X_0\cap_{\PP} X_1$ coincides with the base locus of $\shX$.
Near $X_0$, the singular locus of $\shX$ contains the curve $Z=\Sing D_0=(\Sing X_0)\cap_\PP X_1$.
The degeneration $D_0=\lim_{s\to 0} D_s$ furnishes $Z$ with a perverse sheaf $\shF_Z$ supporting a cohomological mixed Hodge complex as we explain in \S\ref{S-topology},\S\ref{S-cohomology}.

\begin{theorem} \label{maintheorem}
Let $(Z,\shF_Z)$, $(\check Z,\shF_{\check Z})$ be perverse curves obtained by the above procedure respectively from mirror partners of the Batyrev construction of Calabi-Yau threefolds after a MPCP resolution of the ambient toric Fano varieties.
\begin{enumerate}
\item The Euler number of $(Z,\shF_Z)$ coincides with the Euler number of a general member of $\shX$. A similar statement holds for the duals.
\item Let $\Gamma$ denote the $1$-skeleton of the dual intersection complex of $D_0$ and $b_1(\Gamma)$ its first Betti number. Let $v,e$ be the number of vertices and edges of $\Gamma$. 
Note that $e$ coincides with the number of components of $Z$.
Let $n$ be the number of triple points in $Z$ and $g$ the sum of the genera of the components of $Z$. We have
$$
\begin{array}{rcccccccl}
h^{1,0}(Z,\shF_Z) &=& h^{0,1}(Z,\shF_Z) &=& n+g-b_1(\Gamma)&=&v-1+n+g-e, \\
h^{0,0}(Z,\shF_Z) &=& h^{1,1}(Z,\shF_Z) &=& e-b_1(\Gamma)&=&v-1.$$ 
\end{array}
$$
\item $h^{p,q}(Z,\shF_Z)= h^{1-p,q}(\check Z,\shF_{\check Z}).$
\end{enumerate}
\end{theorem}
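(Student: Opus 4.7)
My plan is to treat the three parts with different techniques: (1) is an Euler-characteristic identity that follows from vanishing-cycle defects, (2) is a local-to-global Hodge computation built around the pair-of-pants model, and (3) is a combinatorial matching on Batyrev's mirror polytopes driven by the formulas of (2).

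For (1), the sheaf $\shF_Z$ is constructed in \S\ref{S-topology} as a perverse sheaf of vanishing cycles for the degeneration $\shD\to\PP^1$, so its hypercohomology Euler characteristic is the standard defect $\chi(D_s)-\chi(D_0)$ for $s$ near $0$. To identify this with $\chi(X_t)$ for general $X_t\in\shX$, I would apply additivity of $\chi$ along the two pencils: $\chi(\shX)=\chi(X_t)+\chi(X_0)$ together with the analogous identity for $\shD\to\PP^1$, combined with the fact that $X_1$ is smooth of the expected codimension and meets a generic $X_t$ transversally. These identities combine to give $\chi(D_s)-\chi(D_0)=\chi(X_t)$. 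The mirror assertion follows by running the same argument on the dual side.

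For (2), my plan is to stratify $Z$ by its triple-point locus and its complement, and to compute $H^*(Z,\shF_Z)$ via the hypercohomology spectral sequence for this stratification, or equivalently via Mayer-Vietoris for the normalisation. Along an open arc of a component, the generic stalk of $\shF_Z$ is pinned down by the local pair-of-pants model; at a triple point, $\shF_Z$ is locally isomorphic to the sheaf of vanishing cycles of $xyz$ on the coordinate axes in $\CC^3$, whose stalk cohomology and Hodge pieces are accessible through \cite{AAEKO11}. The cohomological mixed Hodge complex constructed in \S\ref{S-cohomology} endows the hypercohomology with a Hodge structure concentrated in bidegrees $(p,q)\in\{0,1\}^2$; decoding each bidegree produces contributions from the $v-1$ independent edge relations in $\Gamma$, the $g$ geometric genera of components, and the $n$ triple points, which assemble into the stated closed forms after collapsing by $b_1(\Gamma)=e-v+1$.

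For (3), once (2) is in hand the duality $h^{p,q}(Z,\shF_Z)=h^{1-p,q}(\check Z,\shF_{\check Z})$ reduces to the two combinatorial identities
\begin{align*}
\check v-1 &= n+g-b_1(\Gamma), \\
v-1 &= \check n+\check g-b_1(\check\Gamma),
\end{align*}
on the Batyrev mirror pair $(\Delta,\Delta^*)$ of reflexive polytopes. Each of $v,e,n,g$, and $b_1(\Gamma)$ admits a lattice-point description on $\partial\Delta$ (with the primed analogues given by the same formulas on $\partial\Delta^*$): $v$ and $e$ come from the MPCP stratification, $n$ from triple-face incidences, and $g$ from Newton-polytope genus formulas on two-faces. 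I expect this step to be the main obstacle: unfolding $b_1(\Gamma)$ from the face lattice of $\Delta$ and matching it against the interior lattice-point count on $\Delta^*$ requires the full combinatorial substance of Batyrev's duality, whereas (1) and (2) are comparatively routine once the local pair-of-pants model and the mixed Hodge complex machinery of \S\ref{S-cohomology} are in place.
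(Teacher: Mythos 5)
There are genuine gaps in parts (1) and (2); your plan for (3) does match the paper's (a purely combinatorial matching via lattice-point counts on the dual pair of reflexive polytopes, using the formulas of (2)), but it is fed by the two earlier steps, which as proposed do not go through.

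For (1), the identity $\chi(\shX)=\chi(X_t)+\chi(X_0)$ is false in general: the pencil $\shX$ has singular members besides $X_0$ (e.g.\ nodal hypersurfaces over the discriminant), and each contributes to $\chi(\shX)$; the same objection applies to the surface pencil. Moreover, the vanishing-cycle defect that computes $\chi(Z,\shF_Z)$ is the one for the \emph{resolved} family $\tilde\shD\ra\PP^1$, whose central fibre $\tilde D_0$ differs from $D_0=X_0\cap X_1$ by the exceptional curves of the small resolution of the ordinary double points of $\shD$ along $Z\cap X_2$; using $\chi(D_0)$ instead gives the wrong answer (for the quintic, $\chi(D_0)-\chi(D_{t_0})=425-875=-450$, whereas $\chi(\tilde D_0)-\chi(D_{t_0})=-200=\chi(X_t)$). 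Finally, even with correct bookkeeping, no mechanism is offered for turning a defect of a pencil of \emph{surfaces} inside $X_1$ into the Euler number of a \emph{threefold} member of $\shX$; this is not a formal consequence of additivity. The paper instead derives (1) from (2): it computes $e(Z,\shF_Z)=\sum_i e(Z_i)-2n$ from the stalk ranks of $\shF_Z$ (Theorem~\ref{thm-euler-Z-ZF}), expresses this in polytope data (Lemma~\ref{countZdata}), and matches it against Batyrev's formula for $\chi(X_t)$ using Pick's theorem and face duality.

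For (2), the local pair-of-pants model only gets you as far as $h^{0,0}(Z,\shF_Z)=\dim\ker\bigl(\delta:H^0(D^2)\ra H^0(D^3)\bigr)=e-\rank\delta$; the asserted value $v-1$ is equivalent to $\rank\delta=b_1(\Gamma)=1+e-v$, i.e.\ to exactness of $H^0(D^1)\ra H^0(D^2)\ra H^0(D^3)$ in the middle. This is a \emph{global} statement about the dual intersection complex of $D_0$ and is false for a general normal crossing degeneration of surfaces (take one whose dual intersection complex has nonvanishing $H^1$). The paper obtains it from the vanishing $H^1(X_1\cap X_2,\CC)=0$ (Lemma~\ref{lem_h1=0}, proved with Bernshte\v{\i}n's Lefschetz theorem and a weight argument) fed into the weight spectral sequence of the nearby fibre of $\shD\ra\PP^1$; this is precisely where the ambient toric/Batyrev hypothesis enters part (2), and your proposal never invokes it. The identification $\rank\delta^*=\rank\delta$ and the exact sequences splitting $\HH^1$ into $H^{1,0}\oplus H^{0,1}$ (Proposition~\ref{Hodge-pc}, via the Deligne splitting and $E_2$-degeneration of the weight spectral sequence) are also needed to separate the $g$ and $n-b_1(\Gamma)$ contributions; "decoding each bidegree" needs these inputs made explicit before the closed forms can be asserted.
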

\begin{proof} Item (1), (2), (3) will be proved in \S2.2 in theorem \ref{mainthm-1}, \ref{mainthm-2}, \ref{mainthm-3} respectively.
\end{proof}
Phrasing a homological mirror symmetry conjecture for $(Z,\shF_Z), (\check Z,\shF_{\check Z})$ currently fails by the absence of a definition of the Fukaya category of a perverse curve. Some progress towards the latter has been made by Auroux and Ganatra as well as Abouzaid and Auroux \cite{AA} using ambient Landau-Ginzburg models.

\begin{examples} 
\label{examples} The calculation for the following (1) and (2) can be found in \S\ref{appendix}.
\begin{enumerate}
\item The perverse curves in the quintic threefold and its mirror dual have the following Hodge diamonds.
\begin{center}
\[
\begin{array}{p{5cm}p{5cm}}
\xy
(15,0)*{}="A"; (0,12)*{}="B"; (15,24)*{}="C"; (30,12)*{}="D";
"A"; "B" **\dir{-};
"A"; "D" **\dir{-};
"C"; "B" **\dir{-};
"C"; "D" **\dir{-};
(15,12)*{
\begin{array}{ccc}
&4\\
104&&104\\
&4
\end{array} 
};
(15,-6)*{\hbox{\small quintic perverse curve}};
\endxy
&
\xy
(15,0)*{}="A"; (0,12)*{}="B"; (15,24)*{}="C"; (30,12)*{}="D";
"A"; "B" **\dir{-};
"A"; "D" **\dir{-};
"C"; "B" **\dir{-};
"C"; "D" **\dir{-};
(15,12)*{
\begin{array}{ccc}
&104\\
4&&4\\
&104
\end{array} 
};
(15,-6)*{\hbox{\small quintic dual perverse curve}};
\endxy
\end{array}
\] 
\end{center}
It was noticed by W. Ruan that the curve $Z=(\Sing X_0)\cap X_1$ (without perverse sheaf structure) for the quintic $X_1$ determines the pencil generated by $X_1$ and $X_0$ uniquely \cite[Thm 3.1]{WR99}. 
Later Gross and Siebert generalized such a result by showing that the log structure on $X_0$ is determined by $Z$ which in turn reproduces the 1-parameter family $\shX$ under a rigidity assumption on $Z$ \cite{GS03},\cite{GS11}, cf. \cite{Ru10}.

\item The perverse curves in the $(2,2,2,2)$-hypersurface in $(\PP^1)^4$ and its Batyrev mirror dual have the following Hodge diamonds respectively.
\begin{center}
\[
\begin{array}{p{5cm}p{5cm}}
\xy
(15,0)*{}="A"; (0,12)*{}="B"; (15,24)*{}="C"; (30,12)*{}="D";
"A"; "B" **\dir{-};
"A"; "D" **\dir{-};
"C"; "B" **\dir{-};
"C"; "D" **\dir{-};
(15,12)*{
\begin{array}{ccc}
&7\\
71&&71\\
&7
\end{array} 
};
(15,-6)*{\hbox{\small $(2,2,2,2)$ perverse curve}};
\endxy
&
\xy
(15,0)*{}="A"; (0,12)*{}="B"; (15,24)*{}="C"; (30,12)*{}="D";
"A"; "B" **\dir{-};
"A"; "D" **\dir{-};
"C"; "B" **\dir{-};
"C"; "D" **\dir{-};
(15,12)*{
\begin{array}{ccc}
&71\\
7&&7\\
&71
\end{array} 
};
(15,-6)*{\hbox{\small $(2,2,2,2)$ dual perverse curve}};
\endxy
\end{array}
\] 
\end{center}
\item Schoen's Calabi-Yau threefold is obtained as a fibred product of two rational elliptic surfaces and was studied in \cite{Gr05} from a toric degeneration point of view. 
Its mirror dual is of the same type; the Hodge numbers are $h^{1,1}=h^{2,1}=19$. 
The perverse curve on either side consists of $24$ disjoint smooth elliptic curves, so $h^{i,j}=24$ for $0\le i,j\le 1$. 
We see that the statement of Thm.~\ref{maintheorem} holds for this example even though it doesn't fit in the Batyrev- but Batyrev-Borisov-duality \cite{BB94}.
\end{enumerate}
\end{examples}  
In the absence of an ambient space, we expect that perverse curve structures can be constructed, possibly under some conditions, by patching Landau-Ginzburg models similar to the techniques introduced in \cite{Jo13}.

\subsection*{Acknowledgments}  
We would like to thank Denis Auroux's seminar audience as well as Hertling-Sevenheck's workshop participants for their interest and critical remarks. 
Parts of this work were funded by the Carl-Zeiss-Foundation, DFG grants SFB-TR-45, RU 1629/2-1 and a Fields Postdoctoral Fellowship.

\section{Perverse curves from a normal crossing degeneration of surfaces}
\label{S-topology}
Let $\DD$ denote the unit disc. We say a proper holomorphic map $f:\shD \ra\DD$ is a normal crossing degeneration if $\shD$ is smooth, $D_0=f^{-1}(0)$ is a normal crossing divisor in $\shD$ and $f$ is smooth outside of $D_0$. 
Let $t_0\neq 0$ be a nearby value, $D_{t_0}=f^{-1}(t_0)$ and $r:D_{t_0}\ra D_0$ a retraction map, then we define the perverse sheaf of vanishing cycles
$$\shF_Z=\phi_f\ZZ[1]:=\cone(\ZZ_{D_0}\ra Rr_*\ZZ_{D_{t_0}})[1]$$
which is supported on $Z:=\Sing D_0$.
One can replace the non-canonical map $r$ by the canonical map $\tilde \shD^*\ra \shD$ \'a la Deligne where $\tilde \shD^*$ is the universal cover of $\shD^*=\shD\setminus D_0$. One then pulls back the resulting sheaf from $\shD$ to $D_0$. 
Another canonical choice the we are going to give in detail in \S\ref{S-global-topology} is the map
$r: (D_0)_{\log,1}\ra D_0$ from the fibre over $1$ of the Kato-Nakayama space associated to a log smooth morphism obtained from the map of pairs $f:(\shD,D_0)\ra (\DD,0)$.
All these give (quasi-)isomorphic sheaves $\shF_Z$.

Deligne \cite{DelTH} and Steenbrink \cite{St75} constructed a cohomological mixed Hodge complex of sheaves supported on $\ZZ_{D_0}$ and $Rr_*\ZZ_{D_{t_0}}$ respectively. 
By taking the mixed cone of these, one obtains a cohomological mixed Hodge complex of sheaves supported on $\shF_Z$ which furnishes the sheaf of vanishing cycles of $f$ with a mixed Hodge structure, see \cite{GKR12} and references therein for further details. Most notably $\shF_Z\otimes\CC=\phi_f\CC[1]$, $F^k(\shF_Z\otimes\CC)=F^{k+1}\phi_f\CC[1]$ and $W^k(\shF_Z\otimes\CC)=W^{k+1}\phi_f\CC[1]$.

\begin{lemma-definition} 
\label{Poincare-pc}
Let $\dim D_0=2$ then $(Z,\shF_Z)$ is a perverse curve. 
The Hodge numbers defined by $h^{p,q}(Z,\shF_Z):=\dim\Gr_F^p\HH^{p+q}(Z,\shF_Z\otimes\CC)$ (i.e. by ignoring the weight filtration) satisfy Poincar\'e duality, i.e. 
$h^{0,0}=h^{1,1}$, $h^{1,0}=h^{0,1}$.
\end{lemma-definition}
\begin{proof} This follows directly from Prop.~\eqref{Hodge-pc}. A second proof is the following: the given shifts from $\phi_f$ to $\shF_Z$ imply the first and last equality in the chain
\begin{equation}
\resizebox{\textwidth}{!}{
$h^{p,q}\HH^i(Z,\shF_Z)=h^{p+1,q+1}\HH^{i+1}(D_0,\phi_f\CC)=h^{3-(p+1),3-(q+1)}\HH^{4-(i+1)}(D_0,\phi_f\CC)=h^{1-p,1-q}\HH^{2-i}(Z,\shF_Z)$ \nonumber
}
\end{equation}
whereas the middle one is \cite[Lem. 4.7,(5)]{GKR12} via $Y=D_0$, $Z=\Sing Y$, $n=3$.
\end{proof}

When $\dim D_0=2$, the Hodge numbers of $(Z,\shF_Z)$ are determined by the topology just like for usual algebraic curves.
By definition, for a contractible open subset $U\subseteq Z$, we have
\begin{equation}
H^i(U,\shF_Z)=\left\{\begin{array}{ll}H^{i+1}(r^{-1}(U),\ZZ)&\hbox{ for }i\ge 0\\0&\hbox{ for }i<0.\end{array}\right.
\label{F-is-r-inverse}
\end{equation}

\subsection{Global topology}
\label{S-global-topology}
Consider the log structure $\alpha:\shM_{(\shD,D_0)}=j_*\shO^\times_{\shD\setminus D_0}\cap \shO_\shD \hra \shO_\shD$ on $\shD$ where $j:\shD\setminus D_0\hra \shD$ denotes the usual embedding and $\alpha$ is the natural inclusion. It is of Deligne-Faltings-type \cite[Complement 1]{Ka89}, i.e. for some $N\in\NN$ there is a map of monoid sheaves $\phi:\NN^N\ra \shM_{(\shD,D_0)}/\shO^\times_\shD$ that lifts \'etale locally to a chart of $\shM_{(\shD,D_0)}$. Equivalently, it can be given by a set of $N$ line bundles $\shL_1,...,\shL_N$ on $\shD$ with homomorphisms $s_i:\shL_i\ra\shO_\shD$. 
Indeed, let $\shL_i$ be the line bundle associated to the $\shO^\times_\shD$-torsor $\shL_i^\times=\pi^{-1}(\phi(e_i))$ where $\pi:\shM_{(\shD,D_0)}\ra\shM_{(\shD,D_0)}/\shO^\times_\shD$ denotes the natural projection and $e_i$ is the $i$th generator of $\NN^N$. Then take $s_i$ to be the map induced by $\alpha$. 
Conversely, given $s_i:\shL_i\ra\shO_\shD$, denote by $s$ the map of monoid sheaves
\begin{equation} 
s:T_{\shO_\shD^\times} \left(\bigoplus_{i=1}^N \shL^\times_i\right) \ra \shO_\shD
\label{tensor-construction}
\end{equation}
given at degree one by
$\prod s_i:\bigoplus_{i=1}^N \shL^\times_i\ra\shO_\shD$ and $T_{\shO_\shD^\times}$ means taking the tensor algebra of an $\shO_\shD^\times$-module.
We reconstruct $\shM_{(\shD,D_0)}$ as the log structure associated to the pre-log structure $s$, i.e. 
$\shM_{(\shD,D_0)}=\left.T_{\shO_\shD^\times} \left(\bigoplus_{i=1}^N \shL^\times_i\right)\right/\sim$
where we define $m\sim n$ iff $am=bn$ for some $a,b\in s^{-1}(c)$ for some $c\in\shO^\times_\shD$.

Let $D_1,...,D_N$ be an enumeration of the components of $D_0$.
In our case, the map $\phi$ is given by the orders of vanishing of elements of $\shM_{(\shD,D_0)}$ along $D_1,...,D_N$, $\shL_i=\shO_\shD(-D_i)$ and $s_i$ is the natural embedding $\shO_\shD(-D_i)\ra\shO_\shD$.
The map $f:\shD\ra\DD$ gives a section $f\in\shM_{(\shD,D_0)}$ upon picking a coordinate on the disk and $\pi(f)$ is the image under $\phi$ of the diagonal element $\sum_i e_i\in \NN^N$. 
The choice of coordinate becomes irrelevant when one pulls back the log structure from $\shD$ to $D_0$ as we do shortly in order to construct the canonical retraction $r$.
\begin{remark}
\label{normal-bundles}
For $i\neq j$, let $D_{i,j}=D_i\cap D_j$ and $D^\circ_{i,j}=D_{i,j} \setminus \bigcup_{k\neq i,j} D_k$.
The existence of a section $f\in\shL^\times_1\otimes...\otimes\shL^\times_N$ places restrictions on the line bundles $\shL_i$ as it trivializes their tensor product. 
At the locus $D^\circ_{i,j}$ all $\shL_k$ for $k\neq i,j$ can be trivialized and we get $(\shL_i^\times)|_{D^\circ_{i,j}}\otimes(\shL_j^\times)|_{D^\circ_{i,j}}\cong\shO^\times_{D_{i,j}}$, so the normal bundles along $D^\circ_{i,j}$ in $D_i$ and $D_j$ are dual to another.
We may replace $D^\circ_{i,j}$ in the statement by the real oriented blow-up $\Blo_P D_{i,j}$ of $P=D_{i,j}\cap  \bigcup_{k\neq i,j} D_k$ inside $D_{i,j}$.
\end{remark}

\begin{definition} 
\label{def-KNspace}
The \emph{Kato-Nakayama space} $\shD_\llog$ is the set of pairs $(x,\sigma)$ with $x\in \shD$ and $\sigma\in\Hom(\shM_{(\shD,D_0),x},S^1)$ such that $\sigma\circ\alpha$ sends $h\in\shO_\shD^\times$ to 
$\frac{h}{||h||}$; for further details, see \cite[after Def.3.4]{NO10}; cf. \cite{KN99} and \cite{RSTZ12}.
\end{definition}

\begin{example}
\label{KN-orblow}
If $\shM_{(D_{i,j},P)}$ denotes the divisorial log structure on $D_{i,j}$ with respect to $P$ then $\Blo_P D_{i,j}$ is its Kato-Nakayama space.
\end{example}

Let $\rho :\shD_\llog\ra\shD$ denote the natural projection $(x,\sigma)\mapsto x$ which is an isomorphism away from $D_0$. By \cite[Theorem 5.1]{NO10}, the map $\shD_\llog\ra\DD_\llog$ induced by the map of log spaces $f:(\shD,\shM_{(\shD,D_0)})\ra(\DD,\shM_{(\DD,0)})$ is a topological fibre bundle. We have a commutative diagram.
\begin{equation}
\xymatrix@C=30pt
{
\shD_\llog\ar_{f_\llog}[d]\ar^\rho[r]& \shD\ar^f[d] \\
\DD_\llog\ar[r] & \DD
}
\label{Dlog}
\end{equation}
We pull back the log structures $\shM_{(\shD,D_0)}$ to $D_0$ and $\shM_{(\DD,0)}$ to $0$. 
Constructing the resulting map on Kato-Nakayama spaces yields
\begin{equation}
\xymatrix@C=30pt
{
D_{0,\llog}\ar_{(f|_{D_0})_\llog}[d]\ar^\rho[r]& D_0\ar^{f|_{D_0}}[d] \\
\{0\}_\llog\ar[r] & \{0\}
}
\label{D0log}
\end{equation}
which is the pullback of \eqref{Dlog} to $\{0\}_\llog\ra \{0\}$. This map is the projection 
$$\{0\}\times \Hom(\NN,S^1)\ra \{0\}.$$ 
Let $1\in \Hom(\NN,S^1)$ be the trivial map which we identify also with $(0,1)\in (\{0\}\times \Hom(\NN,S^1))$. 
Pulling back the left column in \eqref{D0log} to $1$ yields
$$
\xymatrix@C=30pt
{
(D_{0,\llog})_1\ar_{(f|_{D_0})_\llog}[d]\ar[r]^r& D_0\ar^{f|_{D_0}}[d] \\
\{1\} \ar[r] & \{0\}
}
$$
where we find the canonical retraction map $r$ from the nearby to special fibre as the top vertical map. We have
$$(D_{0,\llog})_1= \{(x,\sigma)\in D_{0,\llog} \mid \sigma(f)=1\}.$$
We are interested in the restriction of $r$ to $Z=\Sing D_0$ which again can realized by pulling back the log structure to $Z$.
This pullback is most easily understood by pulling back the line bundles $\shL_i$ to $Z$ and performing the constructing in $\eqref{tensor-construction}$.
We extend Rem.~\ref{normal-bundles} to the following lemma.

\begin{lemma}
\label{lemma-orientation-S1-bundles}
The map $r^{-1}(D^\circ_{i,j})\ra D^\circ_{i,j}$ is the circle bundle associated to $\shL_i|_{D^\circ_{i,j}}=\shO_{D_i}(-D_{i,j})|_{D^\circ_{i,j}}$ or $\shL_j|_{D^\circ_{i,j}}=\shO_{D_j}(-D_{i,j})|_{D^\circ_{i,j}}$ depending on a choice of orientation of the circle bundle. Such a choice can be deduced from the ordering $i<j$ versus $j<i$. 
In particular $R^1(r|_{r^{-1}(D^\circ_{i,j})})_*\ZZ\cong\ZZ_{D^\circ_{i,j}}$ with the choice of such an isomorphism depending on the orientation.
\end{lemma}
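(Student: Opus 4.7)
The plan is to work locally around a point $x\in D^\circ_{i,j}$, identify the fiber of $r$ explicitly as a subgroup of a two-torus, and then track how the parametrization transforms under change of local trivialization to identify the global $S^1$-bundle. Near $x$ only the two components $D_i,D_j$ pass through, so the pullback of the log structure to $D_0$ has $\overline{\shM}_x=\NN^2$ generated by the classes of local generators $z_i,z_j$ of $\shL_i,\shL_j$. Since $D_k\cap D^\circ_{i,j}=\emptyset$ for $k\neq i,j$, each $\shL_k|_{D^\circ_{i,j}}$ is canonically trivialized by the inclusion $s_k:\shL_k\hra\shO_\shD$, so the tensor algebra construction \eqref{tensor-construction} reduces to $\shL_i,\shL_j$ and $\shO^\times$, and the section $f$ corresponds to $z_i\otimes z_j$ up to a unit.

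The first step is to describe the fiber of $(D_{0,\llog})_1\to D_0$ at $x$. A homomorphism $\sigma\in\Hom(\shM_x,S^1)$ lifting $u\mapsto u/|u|$ on $\shO^\times_x$ is determined, once local generators are chosen, by the pair $(\sigma(z_i),\sigma(z_j))\in(S^1)^2$. The condition $\sigma(f)=1$ cuts out the subtorus $\{(a,b)\in(S^1)^2:ab=1\}\cong S^1$, so the fiber is an $S^1$ parametrized by either $a=\sigma(z_i)$ or $b=\sigma(z_j)=a^{-1}$. Running this over all points of $D^\circ_{i,j}$ presents $r^{-1}(D^\circ_{i,j})\to D^\circ_{i,j}$ as a principal $S^1$-bundle.

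The main technical step is to identify the transition cocycle that turns this local description into a global $S^1$-bundle. Under a change $z_i\mapsto u z_i$ with $u\in\shO^\times$, the parameter transforms as $\sigma(u z_i)=(u/|u|)\sigma(z_i)$, which is exactly the $S^1$-valued cocycle for the unit circle bundle of the dual line bundle $\shL_i^{\vee}|_{D^\circ_{i,j}}$. Invoking Remark \ref{normal-bundles}, the section $f$ trivializes $\shL_i\otimes\shL_j$ on $D^\circ_{i,j}$ and therefore identifies $\shL_i^{\vee}|_{D^\circ_{i,j}}\cong\shL_j|_{D^\circ_{i,j}}$; combined with the adjunction identification $\shO_\shD(-D_j)|_{D_i}=\shO_{D_i}(-D_{i,j})$, this yields the first presentation stated in the lemma. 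Switching to the parametrization by $\sigma(z_j)$ produces the dual presentation via $\shL_j^{\vee}|_{D^\circ_{i,j}}\cong\shL_i|_{D^\circ_{i,j}}$ and the analogous adjunction on $D_j$. The choice of which of $\sigma(z_i),\sigma(z_j)$ plays the role of the parameter is fixed by the ordering $i<j$ versus $j<i$, which produces the two opposite orientations on the fibers.

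The final statement about $R^1 r_*\ZZ\cong\ZZ_{D^\circ_{i,j}}$ is then immediate from the Leray spectral sequence of the $S^1$-bundle: each fiber has $H^1(S^1,\ZZ)\cong\ZZ$, with the isomorphism depending on a choice of generator, which is exactly the orientation data above. The hardest part of the argument is the bookkeeping: one must verify that the cocycle $u/|u|$ really picks out $\shL_i^{\vee}$ rather than $\shL_i$, and then consistently match the $(-D_{i,j})$ signs through the adjunction, so that the two inversions cancel and the final presentation agrees with the one stated.
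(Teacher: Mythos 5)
Your argument is correct and is exactly the justification the paper intends: the paper states this lemma without a separate proof, presenting it as an extension of Remark~\ref{normal-bundles} together with the local Kato--Nakayama description that appears in \S\ref{S-local-topology} (the fibre $\{(a,b)\in(S^1)^2: ab=1\}$ and the cocycle $u/|u|$). Your explicit tracking of the transition cocycle, the dual-normal-bundle identification $\shL_i^{\vee}|_{D^\circ_{i,j}}\cong\shL_j|_{D^\circ_{i,j}}$ from the trivialization by $f$, and the adjunction $\shO_\shD(-D_j)|_{D_i}=\shO_{D_i}(-D_{i,j})$ supply precisely the bookkeeping the paper leaves implicit.
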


\subsection{Local topology}
\label{S-local-topology}
Locally on its source, $f:\shD\ra\DD$ can be given as $f=z_1\cdot...\cdot z_s$ where $z_i$ are local equations of components of $D_0$. 
Let $\diag_s:\NN\ra\NN^s\times\NN^{n-s}$ denote product of the diagonal embedding into $\NN^s$ with the trivial map to $\NN^{n-s}$. 
We may describe $f$ locally as
$$\Spec\CC[\NN^s\times\NN^{n-s}]\ra \Spec\CC[\NN]$$
induced by $\diag_s$ and the log structure is induced by the chart $\NN^s\ra \CC[\NN^s\times\NN^{n-s}]$.
In this local description, the diagram \eqref{Dlog} becomes on closed points
$$
\xymatrix@C=30pt
{
\Hom(\NN^s,\RR_{\ge 0}\times S^1)\times \Hom(\NN^{n-s},\CC) \ar_{f_\llog}[d]\ar[r]& \Hom(\NN^s\times\NN^{n-s},\CC) \ar^f[d] \\
\Hom(\NN,\RR_{\ge 0}\times S^1)\ar[r] & \Hom(\NN,\CC)
}
$$
where each term is a $\Hom$ of commutative monoids, the vertical maps are induced by $\diag_s$ and the horizontal maps are induced by the monoid surjection
$\RR_{\ge 0}\times S^1\ra\CC$ realizing the real oriented blow-up of $\CC$ in the origin.
Following through the constructions of the previous section, we may describe $r$ locally as the map
$$\begin{array}{c}\{\phi\in \Hom(\NN^s,\RR_{\ge 0}\times S^1)\,|\, \phi(\diag_s(1))=(0,1)\}\times \Hom(\NN^{n-s},\CC) \qquad\qquad\qquad\\
\qquad\qquad\qquad \stackrel{r}{\lra} \{\phi\in\Hom(\NN^s,\CC)\,|\, \phi(\diag_s(1))=0\}\times \Hom(\NN^{n-s},\CC)
\end{array}$$
Denoting the trivial map by $0$ (the origin on the right hand side), we find 
$$r^{-1}(0)=\{\phi\in\Hom(\NN^s,S^1)\,|\, \phi(1,1,...,1)=1\} \cong (S^1)^{s-1}.$$
Representing $c\in S^1$ by $c=e^{2\pi i\theta}$ for $\theta\in[0,1)$, we have 
\begin{equation}
r^{-1}(0)=\{(\theta_1,...,\theta_s)\in[0,1)^s\,|\,\textstyle\sum_{i=1}^s\theta_i\in\ZZ\}.
\label{r-inv-is-torus}
\end{equation}
and more generally for $p=(r_1e^{2\pi i\alpha_1},...,r_ne^{2\pi i\alpha_n})$,
\begin{equation}
r^{-1}(p)=\{(\theta_1,...,\theta_s)\in[0,1)^s\,|\,\textstyle\sum_{i=1}^s\theta_i\in\ZZ,\ \theta_i=\alpha_i\hbox{ if }r_i>0\}.
\label{rinv}
\end{equation}
We used a choice of coordinates $z_1,...,z_s,...,z_n$ in the description given here for the sake of explicitness. The analogous constructions becomes coordinate-free if we use the monoid sheaf $\shM_{(\shD,D_0)}$ introduced in the previous section.

From now on, we restrict to the case $\dim D_0=2$ and study the topology from which we derive $\shF_Z$.
Let $s=3$, so we have a point $p$ where the maximal number of components of $D_0$ meet, moreover $p\in Z=\Sing D_0$. 
Let $Z_1,Z_2,Z_3$ denote the three components of $Z$ meeting in $p$. Working locally, we take them as discs, e.g. $Z_j\cong \{z_j=r_j e^{2\pi i\theta_j}\,|\,|r_j|<1\}$.
By \eqref{r-inv-is-torus}, $r^{-1}(p)\cong (S^1)^2$ is given as the anti-diagonal subtorus of $(S^1)^3$ as shown in Figure~\ref{antidiagtorus}.
There are three projections $q_i:r^{-1}(p)\ra S^1_{\theta_i}$ to the coordinate $S^1$'s of the cube turning the $2$-torus $r^{-1}(p)$ into a circle bundle over $S^1$ in three different ways.
\begin{figure}
\resizebox{0.5\textwidth}{!}{
\input{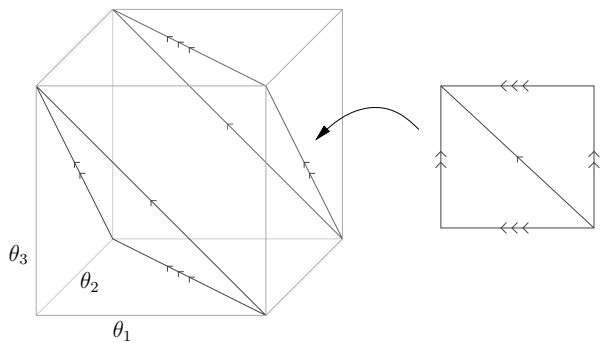}
}
\caption{The fibre of $r$ at a triple point of $Z$ with the cube depicting the fundamental domain of $(\RR/\ZZ)^3$}
\label{antidiagtorus}
\end{figure}
\begin{figure}
\resizebox{0.8\textwidth}{!}{
\input{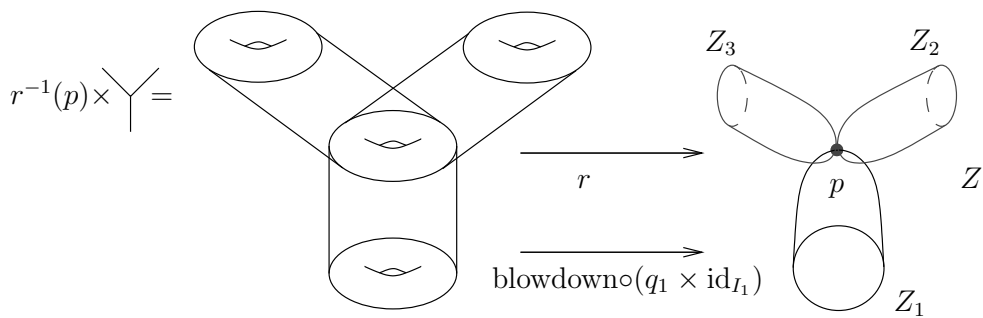}
}
\caption{Gluing $r^{-1}(Z)$ at a triple point of $Z$ where the components $Z_1,Z_2,Z_3$ meet.}
\label{gluetriple}
\end{figure}
Let $Y$ denote the union of three copies $I_1,I_2,I_3$ of the unit interval $[0,1)$ identified in $\{0\}$. 
We have a continuous map $Z\ra Y$ by sending $z\in Z_i$ to $|z|\in I_i$.
Then by \eqref{rinv}, $r:r^{-1}(Z_j\setminus\{0\})\ra Z_j\setminus\{0\}$ is the product of the circle bundle $q_j:r^{-1}(p)\ra S^1_{\theta_j}$ with $I_j\setminus \{0\}$ and these three pieces are glued over $p$ by inserting $r^{-1}(p)$. Applying $q_j\times \id_{I_j}$ to $r^{-1}(p)\times I_j$ yields $S^1_{\theta_j}\times I_j$, the real oriented blowup of $Z_j$ in $p$.
We thus have $r^{-1}(Z)=r^{-1}(p)\times Y$ and the projection $\pr_2$ to $Y$ factors through the map $r:r^{-1}(Z)\ra Z$ by identifying $Z$ with the equivalence relation on $r^{-1}(p)\times Y$ given by
$$x\sim x'\iff \begin{array}{c}\pr_2(x)=\pr_2(x')\hbox{ and if there is a unique $j$ s.t. }\\ \pr_2(x)\in I_j\hbox{ then } q_j(x)=q_j(x'),\end{array}$$
see Figure~\ref{gluetriple}.
We summarize the understanding of the geometry gained in this and the previous section in the following.
\begin{proposition} 
\label{summary-topology}
Let $Z_k$ be a component of $Z$. The map $r:r^{-1}(Z_k)\ra Z_k$ is the composition of 
\begin{enumerate} 
\item an orientable circle bundle $r^{-1}(Z_k)\ra \tilde Z_k$ over the real oriented blow-up $\tilde Z_k$ of $Z_k$ in the points where $Z_k$ meets other components of $Z$ with
\item the blow-down map $\tilde Z_k\ra Z_k$.
\end{enumerate}
The bundle $r^{-1}(Z_k)\ra \tilde Z_k$ is the pull-back of the circle bundle from $Z_k$ associated to the normal bundle of $Z_k$ in a component of $D_0$ containing $Z_k$ or its dual depending on the choice of orientation.
The space $r^{-1}(Z)$ is obtained by gluing the $r^{-1}(Z_k)$ along the real $2$-tori that lie over triple points in $Z$. 
One can construct $r^{-1}(Z)$ entirely from $\shM_Z$, the pullback of the log structure $\shM_{(\shD,D_0)}$ to $Z$ together with the global section given by $f:\shD\ra\DD$.
\end{proposition}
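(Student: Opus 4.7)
The plan is to assemble the statement from the local charts analysed in Section~\ref{S-local-topology} together with the global log-geometric discussion in Section~\ref{S-global-topology}. First I would stratify $Z_k$ into its generic locus and the finite set of triple points of $Z$ contained in it, verify the circle-bundle/blow-up factorisation locally on each stratum, and then glue. Along the way I would invoke Lemma~\ref{lemma-orientation-S1-bundles} to recognise the topological type of the resulting $S^1$-bundle and the formulas \eqref{r-inv-is-torus}, \eqref{rinv} to read off the fibres explicitly.

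At a generic point $q \in Z_k$ exactly two components of $D_0$ meet (since $Z = \Sing D_0$ and $\dim D_0 = 2$), so locally $s = 2$ and \eqref{rinv} yields $r^{-1}(q) \cong S^1$. Hence $r$ restricted to the generic locus of $Z_k$ is a fibre bundle with fibre $S^1$, and by Lemma~\ref{lemma-orientation-S1-bundles} it is the $S^1$-bundle associated to the normal bundle of $Z_k$ in one of the two ambient components of $D_0$ meeting along $Z_k$ (or its dual, depending on the orientation fixed by the chosen ordering). Since $\tilde Z_k$ agrees with $Z_k$ on this locus, items (1) and (2) hold here with nothing to blow up.

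Near a triple point $p \in Z$ at which components $Z_1, Z_2, Z_3$ of $Z$ meet, one of them being $Z_k$, I would use the local chart with $s = 3$: by \eqref{r-inv-is-torus} the fibre $r^{-1}(p)$ is the antidiagonal $2$-torus inside $(S^1)^3$, while for $z \in Z_k \setminus \{p\}$ close to $p$ the formula \eqref{rinv} gives a single $S^1$ in the $\theta_k$-direction. Thus the fibre dimension jumps from $1$ to $2$ over $p$, which is exactly what happens when one composes a circle bundle over the real oriented blow-up $\tilde Z_k$ with the blow-down $\tilde Z_k \to Z_k$. Explicitly, the projection
$$
q_k \times \id_{I_k} : r^{-1}(p) \times I_k \longrightarrow S^1_{\theta_k} \times I_k
$$
realises a chart of $\tilde Z_k$ near $p$ (target: exceptional $S^1$ times radial interval), and the complementary $S^1 \subset r^{-1}(p) \cong (S^1)^2$ over a point of the exceptional fibre provides the circle-bundle fibre of $r^{-1}(Z_k) \to \tilde Z_k$ there. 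Pulling the normal-bundle $S^1$-bundle back through $\tilde Z_k \to Z_k$ matches this identification, giving the pull-back assertion.

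The global gluing then follows by covering $Z$ with the above charts: two generic charts agree on their overlap by the canonical identification of the normal-bundle circle bundle, and the three local pieces coming from the three branches of $Z$ at a triple point are glued across their common fibre $r^{-1}(p)$, intrinsically the antidiagonal torus of $(S^1)^3$, as depicted in Figure~\ref{gluetriple}. The final claim, that $r^{-1}(Z)$ can be built entirely from $(\shM_Z, f)$, follows from functoriality of the Kato-Nakayama construction of Definition~\ref{def-KNspace}: the pullback log structure $\shM_Z$ together with the section $f$ encodes all the line bundles $\shL_i|_Z$ and the antidiagonal constraint $\sigma(f) = 1$ that enter \eqref{r-inv-is-torus}–\eqref{rinv}, so the whole construction is intrinsic. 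The step that genuinely requires care, rather than bookkeeping, is the identification in the triple-point chart between the raw decomposition $r^{-1}(Z_k) = r^{-1}(p) \times I_k \,\cup\, r^{-1}(Z_k \setminus \{p\})$ and the two-step factorisation (blow-up followed by circle bundle); the rest is assembly of ingredients already in place.
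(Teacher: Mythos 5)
Your proposal is correct, and it covers the same supporting material the paper leans on (the local charts of \S\ref{S-local-topology}, Lemma~\ref{lemma-orientation-S1-bundles}, and the formulas \eqref{r-inv-is-torus}--\eqref{rinv}), but it obtains the key structural point --- the factorisation of $r|_{r^{-1}(Z_k)}$ through the real oriented blow-up $\tilde Z_k$ --- by a different mechanism. You build $\tilde Z_k$ by hand in the triple-point chart, identifying $q_k\times\id_{I_k}\colon r^{-1}(p)\times I_k\to S^1_{\theta_k}\times I_k$ as a chart of the blow-up and then gluing with the generic locus. The paper instead observes that on $Z_k=D_{i,j}$ there is a chain of log structures $\shO^\times_{D_{i,j}}\subset\shM_{(D_{i,j},P)}\subset\shM_{(\shD,D_0)}|_{D_{i,j}}$ (with $P$ the set of triple points on $Z_k$), and functoriality of the Kato--Nakayama construction immediately yields $r^{-1}(Z_k)\to\Blo_P Z_k\to Z_k$, the middle space being $\tilde Z_k$ by Example~\ref{KN-orblow}; everything else is delegated to Remark~\ref{normal-bundles}, Lemma~\ref{lemma-orientation-S1-bundles} and the preceding local discussion. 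Your route buys explicitness --- one sees the fibres and the gluing over $r^{-1}(p)$ concretely, which is genuinely the delicate point you correctly flag --- at the cost of having to check that the chart-level identifications are consistent on overlaps. The paper's route makes the factorisation canonical and chart-free, and it has the added benefit of making the final claim (that $r^{-1}(Z)$ depends only on $(\shM_Z,f)$) essentially automatic, whereas in your version that claim still rests on the somewhat softer appeal to functoriality after a coordinate-dependent construction. Both arguments are sound; you may want to note explicitly that the orientability of the circle bundle in item (1) comes from its identification with the unit circle bundle of a complex line bundle in Lemma~\ref{lemma-orientation-S1-bundles}, which you use implicitly.
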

\begin{proof} On a component $Z_k=D_{i,j}$ of $Z$, we have three log structures contained in one another
$\shO^\times_{D_{i,j}}\subset \shM_{(D_{i,j},P)} \subset \shM_{(\shD,D_0)}|_{D_{i,j}}$
giving rise to maps of Kato-Nakayama spaces which are
$$ r^{-1}(Z) \ra \Blo_P Z_k \ra Z_k,$$
see Ex.~\ref{KN-orblow}. The remaining statements are Rem.~\ref{normal-bundles}, Lemma~\ref{lemma-orientation-S1-bundles} and what we said before the Proposition.
\end{proof}

\subsection{Cohomology}
\label{S-cohomology}
The sheaf $\shF_Z$ is given by $$\shF_Z=\cone (\ZZ\ra Rr_*\ZZ)[1]$$
which is quasi-isomorphic to the constant sheaf at a general point and at a triple point it has rank two in degree $0$ and rank one in degree $1$ as we deduce from \eqref{F-is-r-inverse} and \eqref{r-inv-is-torus}.
The cohomology of $\shF_Z$ can be computed from an open cover $\{U_i\,|\,1\le i\le l\}$ of $Z$ with $U_I=U_{i_1}\cap ...\cap U_{i_k}$ contractible for any subset 
$I=\{i_1,...,i_k\}\subseteq \{1,...,l\}$. We have a spectral sequence
$$E^{p,q}_1 = \bigoplus_{I,|I|={p+1}} H^{q+1}(r^{-1}(U_I),\ZZ)\Rightarrow \HH^{p+q}(Z,\shF_Z)$$
with $d_1$ given by the \v{C}ech differential. 
The $E_2$-term is
\begin{equation}
\begin{xy} 
*+{
\begin{array}{ccc}
H^{0}(Z,R^2r_*\ZZ)\\[3mm]
H^{0}(Z,R^1r_*\ZZ) & H^{1}(Z,R^1r_*\ZZ) & H^{2}(Z,R^1r_*\ZZ)
\end{array}
}*\frm{-,};
(-13,4)*{}="A"; (15,-1)*{}="B";
{\ar "A"; "B" };
(1,4)*{\scriptstyle d_2};
\end{xy}
\label{E2term}
\end{equation}
This coincides with the Leray spectral sequence of the map $r:r^{-1}(Z)\ra Z$ with bottom row removed and shifted by $-1$ in vertical direction.
We may assume that each triple point $p$ is contained in a unique open set $U_p$, so 
$$\rank H^i(U_p,\shF_Z)=
\left\{
\begin{array}{ll}
2 & i=0\\
1 & i=1.
\end{array}\right.$$
Observe that if we were to replace $\shF_Z$ by the constant sheaf $\ZZ_Z$ we would get a change of ranks only on $U_p$, namely $\rank H^0(U_p,\ZZ)=1$ and $\rank H^1(U_p,\ZZ)=0$. We deduce that the Euler numbers of the cohomology of $\ZZ_Z$ and $\shF_Z$ coincide.
Denoting the components of $Z$ by $Z_1,...,Z_M$, we thus find the following result for the Euler numbers of a perverse curve coming from a normal crossing degeneration of surfaces.
\begin{theorem}
\label{thm-euler-Z-ZF}
$e(Z,\shF_Z)=e(Z)=\sum_{i=1}^M e(Z_i)-2\#\{\hbox{triple points}\}$.
\end{theorem}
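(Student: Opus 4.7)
My plan is to separate the statement into two equalities and treat each with its own method.

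\textbf{First equality} $e(Z,\shF_Z)=e(Z)$. I would argue this as a consequence of the stalk computations already recorded just above the theorem. Since $\shF_Z$ is constructible with respect to the stratification of $Z$ by its smooth locus and its finite set of triple points, its global Euler characteristic decomposes as
\[
e(Z,\shF_Z)=\sum_{S}e(S)\cdot\chi(\shF_{Z,s_S})
\]
where $S$ runs over strata, $s_S$ is any point in $S$, and $\chi(\shF_{Z,s_S})$ denotes the Euler characteristic of the stalk complex. The same formula applied to the constant sheaf $\ZZ_Z$ gives $e(Z)$, with every stalk contributing $1$. So it suffices to check that $\chi(\shF_{Z,s})=1$ at both types of strata. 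At a smooth point $s$ of $Z$, Proposition~\ref{summary-topology} identifies $r^{-1}(U)$ with a circle bundle over a contractible disk, hence $r^{-1}(U)\simeq S^1$; by \eqref{F-is-r-inverse} this gives $H^0(U,\shF_Z)=\ZZ$ and $H^{i}=0$ for $i\ne 0$, so the local Euler characteristic is $1$. At a triple point $p$ with small contractible neighborhood $U_p$, the paper has already shown $H^0(U_p,\shF_Z)=\ZZ^2$ and $H^1(U_p,\shF_Z)=\ZZ$, giving $2-1=1$. Matching stalk Euler characteristics everywhere yields $e(Z,\shF_Z)=e(Z,\ZZ_Z)=e(Z)$.

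\textbf{Second equality} $e(Z)=\sum_{i=1}^{M}e(Z_i)-2n$ where $n$ is the number of triple points. This is a standard inclusion–exclusion using the decomposition of $Z$ into locally closed strata. Writing $Z_i^\circ:=Z_i\setminus\{\text{triple points of }Z\}$, the curve $Z$ is the disjoint union of the $Z_i^\circ$ and the finite set $T$ of triple points. Additivity of the Euler characteristic on locally closed stratifications gives
\[
e(Z)=\sum_{i=1}^{M}e(Z_i^\circ)+n.
\]
Applying additivity to each individual component, $e(Z_i)=e(Z_i^\circ)+n_i$ where $n_i$ is the number of triple points of $Z$ that lie on $Z_i$. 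Summing and using that each triple point lies on exactly three components (so $\sum_i n_i=3n$), we get $\sum_i e(Z_i^\circ)=\sum_i e(Z_i)-3n$. Substituting back gives $e(Z)=\sum_i e(Z_i)-2n$.

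The whole argument is essentially bookkeeping once the local stalk computation at a triple point is in hand, so there is no substantial obstacle; the only slightly delicate step is being sure that the stratification of $Z$ by smooth-vs.-triple-point locus is exhaustive, which is guaranteed because $D_0$ is a normal crossing divisor of dimension $2$ with at most three local branches, so the singularities of $Z=\Sing D_0$ are precisely the triple points of $D_0$.
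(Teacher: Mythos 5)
Your argument is correct and is essentially the paper's own: the paper establishes $e(Z,\shF_Z)=e(Z)$ by comparing the $E_1$-terms of the \v{C}ech spectral sequence for a good cover of $Z$ for $\shF_Z$ versus $\ZZ_Z$, observing that the ranks differ only over the sets $U_p$ around triple points where the local Euler characteristics nevertheless agree ($2-1=1-0$), which is the same computation you package as additivity of the Euler characteristic over the stratification by the smooth locus and the triple points. The second equality is left implicit in the paper, and your inclusion--exclusion bookkeeping (using that each triple point lies on exactly three local branches of $Z$, as forced by the simple normal crossing hypothesis) fills that in correctly.
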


We next treat the Hodge structure. 
Let $D_1,...,D_N$ be an enumeration of the components of $D_0$ and $D^j=\coprod_{i_1<...<i_j} D_{i_1}\cap...\cap D_{i_j}$, so that
$$\begin{array}{c}
D^1=D_1\sqcup...\sqcup D_N,\\
D^2=Z_1\sqcup...\sqcup Z_M,\\
D^3=\{\hbox{triple points of }$Z$\}.\\
\end{array}$$
The alternating restriction map is defined as the map $\delta:H^k(D^2,\QQ)\ra H^k(D^3,\QQ)$ given by
\begin{equation}
\delta(\alpha)_{i_1,i_2,i_3}= (\alpha_{2,3}-\alpha_{1,3}+\alpha_{1,2})|_{D_{i_1}\cap D_{i_2}\cap D_{i_3}}
\label{defdelta}
\end{equation}
where $\alpha_{i,j}\in H^0(D_i\cap D_j,\CC)$.
Let $\delta^*$ denote the Poincar\'e dual map to $\delta$.
The diagram
\begin{equation}
\xymatrix@C=30pt
{
H^0(D^3,\QQ)\ar[r]^{\delta^*} & H^2(D^2,\QQ) \\
 & H^1(D^2,\QQ) \\
&H^0(D^2,\QQ)\ar[r]_\delta & H^0(D^3,\QQ)
}
\label{E2weightss}
\end{equation}
constitutes the $E_1$-term of the weight spectral sequence of rational level of the cohomological mixed Hodge complex of $\shF_Z$ where the indexing is such that $E^{0,0}_1=H^0(D^2,\QQ)$, see \cite[Lemma~4.7\,(3)-(4)]{GKR12} (to convert notation from there use $Y^i=D^i$ and note the shift $\Gr^W_i\shF_Z=\Gr^W_{i+1}\bar A$).
The columns from left to right in \eqref{E2weightss} give the graded pieces $\Gr^W_{1}, \Gr^W_{0}, \Gr^W_{-1}$ of the monodromy weight filtration on $\shF_Z$. 
\begin{remark} Matching \eqref{E2weightss} with the $\QQ$ scalar extension of \eqref{E2term},
note that the map $\delta^*$ in \eqref{E2weightss} is isomorphic to the only non-trivial differential $d_2\otimes\QQ$ in \eqref{E2term} 
whereas the total cohomology of the bottom three terms in \eqref{E2weightss} coincides with the remaining two terms in \eqref{E2term}.
\end{remark}

\begin{proposition}
\label{Hodge-pc}
We have the following decomposition in graded pieces by the Hodge filtration
\begin{equation}
\HH^i(Z,\shF_Z\otimes\CC)=\left\{\begin{array}{ll}
\coker(\delta^*)&\hbox{for }i=2\\
H^{1,0}\oplus H^{0,1} &\hbox{for }i=1\\
\ker\delta&\hbox{for }i=0
\end{array}\right.
\label{hodge-perv-curve}
\end{equation}
where $H^{1,0}$ and $H^{0,1}$ fit in exact sequences
$$
\begin{array}{rcccccccl}
0&\ra&\ker\delta^*                 &\ra& H^{0,1} &\ra& \oplus_{i=1}^M H^{0,1}(Z_i) &\ra& 0\\
0&\ra& \oplus_{i=1}^M H^{1,0}(Z_i) &\ra& H^{1,0} &\ra& \coker\delta                &\ra& 0
\end{array}
$$
induced by the weight filtration.
\end{proposition}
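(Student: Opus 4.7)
The plan is to use the weight spectral sequence of the cohomological mixed Hodge complex that carries $\shF_Z$, whose $E_1$-page already appears in \eqref{E2weightss}. First I would invoke the general fact that this spectral sequence degenerates at $E_2$, so that $\Gr^W\HH^\ast(Z,\shF_Z\otimes\CC)$ is computed by its $E_2$-page. Taking the kernels and cokernels of the differentials $\delta$ and $\delta^\ast$ and placing them in the appropriate total cohomological degree yields exactly the following: $\ker\delta$ as the unique contribution to $\HH^0$; $\coker\delta^\ast$ as the unique contribution to $\HH^2$; and three weight-graded pieces $\ker\delta^\ast$, $H^1(D^2)=\bigoplus_i H^1(Z_i)$, $\coker\delta$ of weights $1$, $0$, $-1$ contributing to $\HH^1$.

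Second, because $\HH^0$ and $\HH^2$ each admit only one weight-graded piece, their Hodge filtrations are entirely forced by those pieces. The ambient smooth strata $D^2=\sqcup_i Z_i$ and $D^3$ give Hodge structures of pure type $(0,0)$ on $H^0(D^2)$ and $H^0(D^3)$ and pure type $(1,1)$ on $H^2(D^2)$. Since kernels respectively cokernels of morphisms of pure Hodge structures of the same weight are again pure of that weight, $\HH^0=\ker\delta$ is pure of type $(0,0)$ and $\HH^2=\coker\delta^\ast$ is pure of type $(1,1)$, settling the first and third lines of \eqref{hodge-perv-curve}.

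Third, for $\HH^1$ I would combine the purity of each graded piece $\Gr^W_k\shF_Z\otimes\CC$ as a cohomological Hodge complex with the shift conventions $F^k(\shF_Z\otimes\CC)=F^{k+1}\phi_f\CC[1]$ and $W^k(\shF_Z\otimes\CC)=W^{k+1}\phi_f\CC[1]$ recorded in \S\ref{S-topology}, in order to locate the Hodge types of the extremal weight pieces: $\ker\delta^\ast$ must then lie purely in $H^{0,1}$ as a subobject, and $\coker\delta$ must appear purely as a quotient of $H^{1,0}$, while the middle piece $H^1(D^2)$ splits classically as $\bigoplus_i(H^{1,0}(Z_i)\oplus H^{0,1}(Z_i))$. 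Intersecting the three-step weight filtration $0\subseteq W_{-1}\subseteq W_0\subseteq W_1=\HH^1$ with $F^1$ then produces the asserted exact sequence for $H^{1,0}$, and passing to the quotient $\HH^1/F^1$ produces the companion sequence for $H^{0,1}$.

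The main obstacle is this last step, namely proving that $\ker\delta^\ast$ and $\coker\delta$ are purely of type $(0,1)$ respectively $(1,0)$, rather than mixing. This requires careful bookkeeping of the Tate twists and the cohomological-degree shift implicit in passing from the vanishing-cycle complex $\phi_f$ to $\shF_Z=\phi_f[1]$; the point is that in the weight spectral sequence for $\phi_f$ on a smoothing of surfaces, each extremal kernel/cokernel of $\delta$ or $\delta^\ast$ arises together with a Tate twist that shifts its Hodge filtration by one step, and so the subobject $\ker\delta^\ast\subset H^0(D^3)$ (of naive Hodge type $(0,0)$) and the quotient $\coker\delta$ of $H^0(D^3)$ end up on opposite sides of the $(1,0)$/$(0,1)$ divide in $\HH^1(Z,\shF_Z\otimes\CC)$.
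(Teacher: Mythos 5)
Your proposal is correct and follows essentially the same route as the paper: degeneration of the weight spectral sequence \eqref{E2weightss} at $E_2$, plus identification of the Hodge types of the graded pieces $\ker\delta^*$, $H^1(D^2)$, $\coker\delta$. The one ingredient the paper adds is Deligne's canonical bigrading $I^{p,q}$ (setting $H^{1,0}=I^{1,1}\oplus I^{1,0}$ and $H^{0,1}=I^{0,1}\oplus I^{0,0}$), which upgrades your $F^1\HH^1$ and $\HH^1/F^1$ to the actual direct-sum decomposition $\HH^1=H^{1,0}\oplus H^{0,1}$ asserted in the statement; the Tate-twist bookkeeping you single out as the main obstacle is likewise left implicit in the paper's proof.
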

\begin{proof} 
Note that \eqref{E2weightss} degenerates at $E_2$ (see \cite[III, 8.1.9(iv)]{DelTH}). 
The only remaining issue then is the canonicity of the splitting on $\HH^1$ which we now focus on. It can be achieved using Deligne-splitting \cite[Lem-Def. 3.4]{PS08} 
by setting $H^{1,0}=I^{1,1}\oplus I^{1,0}$ and $H^{0,1}=I^{0,1}\oplus I^{0,0}$ which yields the given splitting because 
$I^{0,0}=W_0$, $I^{1,0}=F^1\cap\bar F^0\cap W_1$, $I^{0,1}=F^0\cap\bar F^1\cap W_1$ and $I^{1,1}=F^1\cap(\bar F^1+W_0)$.
\end{proof}
If all $Z_i$ are projective lines (e.g. see Example~\ref{ex-g2mir} below) then $Z$ is rigid and the only possible variation of the Hodge structure on $(Z,\shF_Z)$ arises from varying the extension class of the Hodge-Tate structures that give $\HH^1(S,\shF_Z\otimes\CC)$. We expect that further interesting variations of the above Hodge structure arise from some type of A-model Hodge structure mixing $\HH^0$ and $\HH^2$, see \cite[II,11]{DIP02}, provided one can find a suitable definition of quantum cohomology of a perverse curve.

The following example has been studied in \cite{GKR12}.
\begin{example}[Mirror dual of a genus two curve] 
\label{ex-g2mir}
\begin{figure}[ht]
\begin{minipage}[b]{0.45\linewidth}
\centering
\input{g2mirror.pstex_t}
\caption{Mirror dual of a genus two curve...}
\label{g2mirror}
\end{minipage}
\hspace{0.5cm}
\begin{minipage}[b]{0.45\linewidth}
\centering
\[
\xy
(13,0)*{}="A"; (0,12)*{}="B"; (13,24)*{}="C"; (26,12)*{}="D";
"A"; "B" **\dir{-};
"A"; "D" **\dir{-};
"C"; "B" **\dir{-};
"C"; "D" **\dir{-};
(13,12)*{
\begin{array}{ccc}
&2\\
1&&1\\
&2
\end{array} 
};
\endxy
\] 
\caption{...and its Hodge diamond}
\label{g2Hodge}
\end{minipage}
\end{figure}
Consider the regular function
$$w':\shX'=\Spec\CC[x,y,z,u,v]/(xy-z^2,uv-z^3)\ra\CC$$ given by $w'=x+y+z+u+v$.
Let $\shX$ be a crepant resolution of the blow up of the origin in $\shX'$. We denote the pullback of $w'$ to $\shX$ by $w$. By \cite{GKR12},~Ex.1.24, $w$ is an open subset of a type III degeneration of a K3 surface. $X_0=w^{-1}(0)$ is a normal crossing union of three rational surfaces $D_1,D_2,D_3$ and $Z:=D^2$ is a configuration of three $\PP^1$'s as in Figure~\ref{g2mirror}. 
The map $\delta$ is given by the matrix
$$
\kbordermatrix{&12&13&23&\\
123_1&1&-1&1\\
123_2&1&-1&1
}
$$
so it has rank one. By \eqref{hodge-perv-curve}, we have
$\HH^i(Z,\shF_Z)\cong\CC^2$ for i=1,2,3 and the Hodge diamond is that of a genus two curve rotated by a quarter turn, see Fig.~\ref{g2Hodge}.
\end{example}

\subsection{Cohomological mixed Hodge complex}
Let $(Z,\shF_Z)$ be a perverse curve coming from a normal crossing degeneration of surfaces as in \S\ref{S-topology}. The cohomological mixed Hodge complex on $\shF_Z$ can be given as follows. 
By \cite[Theorem 4.5.(1)]{GKR12}, we may replace the mixed cone by the cokernel of the injection $\CC\ra Rr_*\CC$ whose complex part is given by the double complex ($p,q\ge 0$)
$$\bar A^{p,q}=\Omega_{\shD}^{p+q+1}(\llog D_0) / W_{q+1}$$
where $\Omega_{\shD}^{p+q+1}(\llog D_0)$ denotes the sheaf of differential $(p+q+1)$-forms on $\shD$ with at worst logarithmic poles in $D_0$ and $W_{q+1}$ is the subsheaf given by
$\Omega_{\shD}^{q+1}(\llog D_0)\wedge \Omega_{\shD}^p$. 
Note that by definition, $\bar A^{p,q}=0$ for $p=0$. Moreover, since $\shD$ is a threefold, $\bar A^{p,q}=0$ for $p+q>2$. So only three terms of $\bar A^{\bullet,\bullet}$ are non-trivial, namely
\begin{equation}
\xymatrix@C=30pt
{
\bar A^{1,1}\\
\bar A^{1,0}\ar^d[r]\ar^{\wedge\dlog t}[u]& \bar A^{2,0}.
}
\label{barA1}
\end{equation}
The horizontal differential is the usual exterior derivative and the vertical one is wedging with $f^*(\frac{dt}t)$ for $t$ a coordinate on the base.
If $\bar A^\bullet$ denotes the total complex then 
$$\shF_Z\otimes\CC\cong \bar A^\bullet[1].$$
In order to better understand the terms of $\bar A^\bullet$, we consider the residue map
$$\op{res}_{r,I}:\Omega_{\shD}^{r}(\llog D_0) \ra \Omega^{r-q}_{D_I}(\llog (E_I))$$
for some $I=\{i_1,...,i_q\}\subset \{1,...,N\}$ with $D_I=D_{i_1}\cap...\cap D_{i_q}$ and 
$E_I=\bigcup_{j\not\in I} D_I\cap D_j$.
If $z_{i_j}$ is a local equation of $D_{i_j}$ then $\op{res}_{r,I}$ 
is defined to send $\frac{dz_{i_1}}{z_{i_1}}\wedge...\wedge \frac{dz_{i_q}}{z_{i_q}}\wedge \alpha+\beta$ 
with $\beta$ indivisible by $\frac{dz_{i_1}}{z_{i_1}}\wedge...\wedge \frac{dz_{i_q}}{z_{i_q}}$
to $\alpha|_{D_I}$ and is surjective, see \cite[Def. 4.5]{PS08}.
Summing $\op{res}_{p+q+1,I}$ over all size $q+2$ subsets $I$ of $\{1,...,N\}$ yields
$$\op{res}_{p,q}: \bar A^{p,q}\ra \bigoplus_{|I|=q+2}\Omega^{p-1}_{D_I}(\llog E_I)$$
which is well-defined since $W_{q+1}$ lies in the kernel of each $\op{res}_{p+q+1,I}$.
It restricts to an isomorphism
\begin{equation}
\op{res}_{p,q}|_{W_{q+2}}: W_{q+2}/W_{q+1}\ra \bigoplus_{|I|=q+2}\Omega^{p-1}_{D_I},
\label{res-iso}
\end{equation}
see \cite[Lem. 4.6]{PS08}. Note that $\bigoplus_{|I|=q}\Omega^{p}_{D_I}=\Omega^{p}_{D^{q}}$.
\begin{proposition}
\label{prop-residues-applied}
The term-wise residue map gives a quasi-isomorphism from \eqref{barA1} to
\begin{equation}
\xymatrix@C=30pt
{
\Omega^0_{D^3}\\
\Omega^0_{D^2}\ar^-d[r]\ar^\delta[u]& \tilde\Omega^1_{D^2}(\llog E^2)
}
\label{barA2}
\end{equation}
where $D^3=\coprod_{i_1<i_2<i_3} D_{i_1}\cap D_{i_2}\cap D_{i_3}$ is a union of points, $D^2=\coprod_{i_1<i_2} D_{i_1}\cap D_{i_2}$ a union of curves,
$E^2$ is the divisor on $D^2$ which is $E_I$ on $D_I$ as above and 
$\tilde\Omega^1_{D^2}(\llog E^2)$ is the subsheaf of $\Omega^1_{D^2}(\llog E^2)$ given by the property that for any $i_1<i_2<i_3$ and sections
$\alpha_{i_1,i_2}, \alpha_{i_1,i_3}, \alpha_{i_2,i_3}$ of $\Omega^1_{D_{{i_1},{i_2}}}(\llog E_{{i_1},{i_2}}), \Omega^1_{D_{{i_1},{i_3}}}(\llog E_{{i_1},{i_3}}), \Omega^1_{D_{{i_2},{i_3}}}(\llog E_{{i_2},{i_3}})$ respectively, we have that the residues of $\alpha_{i_1,i_2}, -\alpha_{i_1,i_3}, \alpha_{i_2,i_3}$ in $D_{i_1}\cap D_{i_2}\cap D_{i_3}$ coincide.
\end{proposition}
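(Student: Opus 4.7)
The plan is to construct the map term by term and show that it gives an isomorphism of complexes; since \eqref{barA1} and \eqref{barA2} are then isomorphic as complexes, the residue map is in particular a quasi-isomorphism. The identifications of the outer terms are immediate from \eqref{res-iso}: since $\dim\shD=3$, one has $\bar A^{1,0}=W_2\Omega^2_\shD(\llog D_0)/W_1\xrightarrow{\op{res}_{1,0}}\Omega^0_{D^2}$ and $\bar A^{1,1}=W_3\Omega^3_\shD(\llog D_0)/W_2\xrightarrow{\op{res}_{1,1}}\Omega^0_{D^3}$, each of which is an instance of the residue isomorphism on a single weight-graded piece. The middle term $\bar A^{2,0}=\Omega^3_\shD(\llog D_0)/W_1$ sits in the short exact sequence
$$0\to W_2/W_1\to W_3/W_1\to W_3/W_2\to 0,$$
realizing it as an extension of $\Omega^0_{D^3}$ by $\Omega^1_{D^2}$.

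Next I would define $\op{res}_{2,0}:\bar A^{2,0}\to\Omega^1_{D^2}(\llog E^2)$ as the sum of $\op{res}_{3,I}$ over pairs $|I|=2$ (well-defined on the quotient by $W_1$ since $W_1$ lies in the kernel of each summand), and check that its image lies in the subsheaf $\tilde\Omega^1_{D^2}(\llog E^2)$. This reduces to a local computation at a triple point $p=D_i\cap D_j\cap D_k$ with coordinates $z_i,z_j,z_k$: the representative $\omega=\frac{dz_i}{z_i}\wedge\frac{dz_j}{z_j}\wedge\frac{dz_k}{z_k}$ has residues $\alpha_{ij}=\frac{dz_k}{z_k}|_{D_i\cap D_j}$, $\alpha_{ik}=-\frac{dz_j}{z_j}|_{D_i\cap D_k}$, $\alpha_{jk}=\frac{dz_i}{z_i}|_{D_j\cap D_k}$, and the iterated residues at $p$ of $\alpha_{ij},-\alpha_{ik},\alpha_{jk}$ all equal $1$, matching the sign compatibility defining $\tilde\Omega^1_{D^2}(\llog E^2)$. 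Comparing the weight sequence above with the sequence
$$0\to\Omega^1_{D^2}\to\tilde\Omega^1_{D^2}(\llog E^2)\xrightarrow{\op{Res}_{E^2}}\Omega^0_{D^3}\to 0$$
induced by residues along $E^2$, and applying the five lemma to the residue maps between them (each of which is an isomorphism on the graded pieces by \eqref{res-iso}), one concludes that $\op{res}_{2,0}$ is a sheaf isomorphism onto the subsheaf.

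Finally, the two differentials must commute with the residue. The horizontal $d$ preserves the weight filtration and commutes with taking residues, so it passes to the ordinary exterior derivative $\Omega^0_{D^2}\to\Omega^1_{D^2}\hookrightarrow\tilde\Omega^1_{D^2}(\llog E^2)$. For the vertical wedge with $f^*(\frac{dt}{t})$, a chart where $f=z_1\cdots z_s$ gives $f^*(\frac{dt}{t})=\sum_l\frac{dz_l}{z_l}$; wedging a local section $\omega=\sum_{a<b}\omega_{ab}\frac{dz_a}{z_a}\wedge\frac{dz_b}{z_b}$ with this sum and reducing modulo $W_2$ produces $(\omega_{ij}-\omega_{ik}+\omega_{jk})\frac{dz_i}{z_i}\wedge\frac{dz_j}{z_j}\wedge\frac{dz_k}{z_k}$ at a triple point $D_i\cap D_j\cap D_k$ (with $i<j<k$), whose residue is precisely $\delta$ from \eqref{defdelta}.

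The main obstacle is the second step: unlike the outer terms, $\bar A^{2,0}$ is not a single weight-graded piece, so the residue map is assembled from pairwise residues and must be verified both to land in $\tilde\Omega^1_{D^2}(\llog E^2)$ via the sign bookkeeping at triple points and to realize the same extension of $\Omega^0_{D^3}$ by $\Omega^1_{D^2}$ as the weight filtration. Both are ultimately local calculations at triple points, and this is where the nontrivial content of the proposition resides.
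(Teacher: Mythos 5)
Your proposal is correct and follows essentially the same route as the paper: the outer terms are identified via the residue isomorphism \eqref{res-iso}, and the substance lies in the middle term $\bar A^{2,0}=W_3/W_1$, where both arguments come down to the same sign bookkeeping at a triple point. The only difference is organizational: the paper exhibits an explicit local normal form $f\,d\log z_1\wedge d\log z_2\wedge d\log z_3$ with $f=a_0+\sum_i z_i g_i(z_i)$ and reads off the three residues directly, whereas you compare the weight extension $0\to W_2/W_1\to W_3/W_1\to W_3/W_2\to 0$ with the residue sequence for $\tilde\Omega^1_{D^2}(\llog E^2)$ and invoke the five lemma; your additional explicit check that $\wedge\,\dlog t$ induces $\delta$ is carried out correctly (the paper delegates this identification to Peters--Steenbrink).
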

\begin{proof}
Note that the filtration $W_\bullet$ on $A^{1,1}$ as well as on $A^{1,0}$ has only one step, indeed $A^{1,1}=W_3/W_2$ and $A^{1,0}=W_2/W_1$ so the statement for these terms follows from the isomorphism \eqref{res-iso}. 
For the right term, let $z_1,z_2,z_3$ be defining equations for the components of $D_0$ at a triple point then a section of $\bar A^{2,0}$ near the triple point 
can uniquely be represented by $f d\log z_1\wedge d\log z_2 \wedge d\log z_3$ 
with $f=a_0+z_1g_1(z_1)+z_2g_2(z_2)+z_3g_3(z_3)$ for some $a_0\in\CC$, $g_i\in\CC\{t\}$.
Given $i\in\{1,2,3\}$, $I=\{1,2,3\}\setminus\{i\}$, the image under 
$\op{res}_{3,I}$ is $\eps(f d\log z_i)|_{D_I} = \eps(a_0\dlog z_i+g_i(z_i)dz_i)$ where $\eps=-1$ if $i=2$ and $\eps=1$ otherwise.

The vertical map in \eqref{barA2} is the alternating restriction map $\delta$ defined in \eqref{defdelta}, see \cite[\S11.2.5]{PS08}. The horizontal map in \eqref{barA2} is the usual exterior derivative.
\end{proof}

\begin{proposition} 
\label{depends-only-on-M_Z}
The cohomological mixed Hodge complex supported on $\shF_Z$ coming from the degeneration $f:\shD\ra\DD$ can be constructed entirely from the knowledge of $\shM_Z$, the pull back of the log structure $\shM_{(\shD,D_0)}$ to $Z$, together with the section $f$.
\end{proposition}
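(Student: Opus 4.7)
A cohomological mixed Hodge complex on $\shF_Z$ consists of three pieces of data: a weight-filtered rational complex $(K_\QQ,W_\bullet)$ on $Z$ with $K_\QQ\iso\shF_Z\otimes\QQ$, a bifiltered complex $(K_\CC,W_\bullet,F^\bullet)$ on $Z$ with $K_\CC\iso\shF_Z\otimes\CC$ realizing the Hodge and weight filtrations, and a filtered comparison quasi-isomorphism $K_\QQ\otimes\CC\iso K_\CC$. The plan is to show that each of these, and the compatibility data, can be reconstructed from $(\shM_Z,f)$ alone.

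For the complex part, Proposition~\ref{prop-residues-applied} identifies $K_\CC$ with the explicit complex \eqref{barA2}. Its constituents $\Omega^0_{D^3}$, $\Omega^0_{D^2}$ and $\tilde\Omega^1_{D^2}(\llog E^2)$ depend only on $Z$ as a scheme together with the divisor $E^2$ of its triple points: $D^2\to Z$ is the normalization, $D^3$ is the triple point locus, and the three branches at each triple point are the analytic branches of $Z$ there. The alternating restriction $\delta$ and the residue condition cutting out $\tilde\Omega^1_{D^2}(\llog E^2)$ depend on an ordering of the components of the ambient $D_0$; this ordering is encoded in $\shM_Z$, since at a triple point the stalk of $\shM_Z/\shO_Z^\times$ is canonically $\NN^3$ with generators indexing the three components of $D_0$ meeting there, and the three branches of $Z$ at that point are then labeled by pairs of generators. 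The Hodge filtration is the b\^ete filtration on the total complex, and the weight filtration is the one visible in the columns of \eqref{E2weightss}; both are intrinsic to the presentation \eqref{barA2}.

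For the rational part, Proposition~\ref{summary-topology} states that the topological retraction $r:r^{-1}(Z)\to Z$ is built entirely from $(\shM_Z,f)$, so $\shF_Z=\cone(\ZZ_Z\to Rr_*\ZZ)[1]$, together with its weight filtration arising from the Leray filtration of $r$, depends only on this data. The comparison quasi-isomorphism $\shF_Z\otimes\CC\iso K_\CC$ is a manifestation of the Kato-Nakayama theorem identifying the logarithmic de Rham complex with $R\rho_*\CC$ on the Kato-Nakayama space; since both sides are built from $(\shM_Z,f)$, the comparison itself is canonically determined by this data.

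The main obstacle is verifying that the construction of $\bar A^\bullet$ in \eqref{barA1}, which a priori uses the ambient sheaves $\Omega^\bullet_\shD(\llog D_0)$ on the threefold $\shD$, can be rewritten intrinsically in terms of $(\shM_Z,f)$. Under the residue isomorphism \eqref{res-iso}, each nonzero term of $\bar A^{\bullet,\bullet}$ is identified with a sheaf on a stratum of $D^2\cup D^3\subseteq Z$; the vertical differential $\wedge\dlog t$, where $t$ pulls back to $f$, becomes $\delta$ (this is where the section $f$ enters), and the horizontal differential becomes the intrinsic $d$ on $D^2$. Once this stratum-by-stratum translation is verified, the whole CMHC structure is visibly built from $(\shM_Z,f)$.
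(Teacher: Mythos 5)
Your proposal is correct and follows essentially the same route as the paper's (much terser) proof: the complex part is reduced to the residue presentation of Proposition~\ref{prop-residues-applied}, whose terms involve only logarithmic forms pulled back to $Z$ and hence only $\shM_Z$ and $f$, while the integral/rational part and its weight filtration come from Proposition~\ref{summary-topology}. Your additional remarks on the comparison quasi-isomorphism and the b\^ete/weight filtrations are consistent with what the paper leaves implicit.
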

\begin{proof} 
We have seen that only pullback of logarithmic differential forms to $Z$ enter the definition of $\bar A^\bullet$. Such can be constructed from $\shM_Z$. 
On the other hand, by Prop. \ref{summary-topology}, the integral structure can be obtained from $\shM_Z$ as well. This is also true for the weight filtration, see \cite[\S4.4]{PS08}.
\end{proof}

\section{From linear systems of threefolds to degenerations of surfaces}
\label{S-linsys-to-pc}
\begin{figure}
\resizebox{0.42\textwidth}{!}{
\input{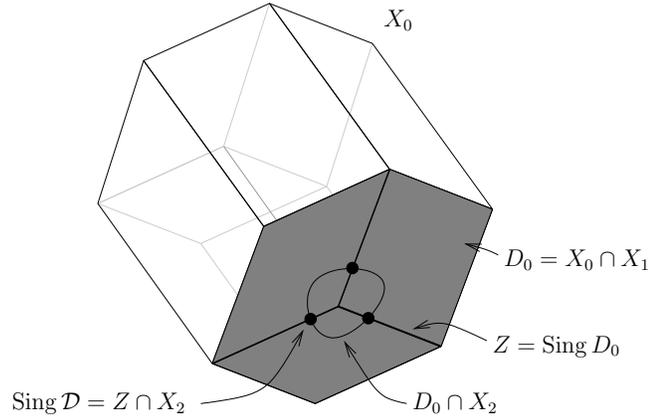}
}
\caption{Schematic view of $Z$ and $X_0\cap X_1\cap X_2$}
\label{x0x1x2}
\end{figure}
Let us recall the construction of perverse curves from the introduction and fill in some details.
\begin{definition}
A Cartier divisor $D$ in $\PP$ is \emph{simple normal crossing} if it is locally of the shape $\Spec[z_1,..,z_n]/(z_1\cdot...\cdot z_r)$ and its components don't self-intersect, i.e. the $z_i$ equate different components of the divisor. 
\end{definition}
\begin{assumption} \label{mainsetup}
We have anticanonical hypersurfaces $X_0, X_1, X_2\subset \PP$ with $X_0$ normal crossing, $X_1,X_2$ smooth.
Also $\bigcap_{i\in I} X_i$ is simple normal crossing of dimension $4-|I|$ for any $I\subset\{0,1,2\}$.
\end{assumption}
We assume that $X_0$ has points where at least three components meet and we require $\PP$ to be non-singular near $X_0$ except possibly at points of $X_0$ where $4$ components meet as these don't affect $\shD$. 
Allowing such singularities is necessary to include all of Batyrev's Calabi-Yau threefolds in our construction.
Let $f_i$ be a local equation for $X_i$, so $\shX=V(f_0+tf_1)$, $\shD=V(f_0+tf_2,f_1)$, $D_0=V(f_0,f_1)$.
By the assumptions on $X_0,X_1,X_2$, near $(\Sing X_0)\cap X_1\cap X_2$ we can find local coordinates $z_1,...,z_4$ such that 
$f_1=z_1$, $f_2=z_2$, $f_0=z_3z_4$ and so $\shD=\Spec\CC[z_1,z_2,z_3,z_4,t]/(z_1,z_2t-z_3z_4)$ and thus 
$$\Sing\shD=(\Sing X_0)\cap X_1\cap X_2$$
is a union of isolated ordinary double points. We choose a small resolution $\rho:\tilde\shD\ra\shD$. 
Note that this can be done by successively blowing up the components of $D_0$ and thus if $\shD$ is projective, we can assume $\tilde\shD$ to be projective as well.
Denoting $\tilde D_0=\rho^{-1}D_0$, $\rho$ gives an identification
$$\Sing\tilde D_0 = \Sing D_0 = Z$$
because at a singularity of $\shD$, $\Sing D_0$ is the locus where two components of $D_0$ meet and blowing up one of them doesn't change the locus of intersection of these two components.
The composition $\tilde\shD\stackrel{\rho}{\ra}\shD\ra\PP^1$ yields a normal crossing degeneration of surfaces. 
We thus obtain a perverse curve $(Z,\shF_Z)$ by \S\ref{S-topology}.
\begin{theorem} 
The perverse curve $(Z,\shF_Z)$ is independent of the choice of resolution $\rho:\tilde\shD\ra\shD$ and of the choice of $X_2$.
\end{theorem}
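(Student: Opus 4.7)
The plan is to invoke Proposition~\ref{depends-only-on-M_Z}, which reduces the theorem to showing that the pair $(\shM_Z, f)$ is canonically independent of $\rho$ and of $X_2$. Here $\shM_Z$ denotes the pullback of the divisorial log structure on $\tilde\shD$ to $Z$, and $f$ the section corresponding to the chosen coordinate on $\DD$.

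I would first treat the resolution $\rho$, since it is the simpler case. The two small resolutions of $\shD$ agree away from the finite set of nodes $Z \cap X_2$, and at each node they differ by a flop of the exceptional $\PP^1$ between the two local components of $\tilde D_0$. The key observation is that this $\PP^1$ has codimension two in $\tilde\shD$ and is not a divisor, so it contributes nothing to the divisorial log structure $\shM_{(\tilde\shD,\tilde D_0)}$. At every $p \in Z$, including the node-identified points, both resolutions present the same local picture of a smooth threefold with $\tilde D_0$ a transverse union of smooth surfaces meeting along $Z$, so $\shM_Z$ and $f$ (pulled back from $\shD\to\DD$) are literally the same.

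Next I would treat the choice of $X_2$. Let $X_2, X_2'$ give degenerations $\shD, \shD'$ with resolutions $\tilde\shD, \tilde\shD'$ and common central fibre $D_0 = X_0 \cap X_1$. Since small resolutions introduce no new divisorial components, the components of $\tilde D_0$ and $\tilde D_0'$ are canonically in bijection with the components of $D_0$, so the characteristic monoid $\shM_Z / \shO_Z^\times$ is intrinsic to $D_0$. Locally at each $p \in Z$, the log structure $\shM_Z$ is the standard divisorial log structure of a multiplicity-$s$ normal crossing with $s \in \{2,3\}$ determined by how many branches of $X_0$ meet at $p$, independent of $X_2$; combined with Step~1, this identifies $\shM_Z$ canonically. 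The lifts $f, f'$ of the common diagonal class $\bar f$ may differ by a section of $\shO_Z^\times$, reflecting nothing more than a change of coordinate on $\DD$.

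The hard part is this last step: ruling out that the unit ambiguity in $f$ changes the cohomological mixed Hodge complex. This is the classical invariance of Steenbrink's limit MHS under reparametrization of the base disk, a principle already flagged in \S\ref{S-topology} in the form \emph{``the choice of coordinate becomes irrelevant when one pulls back the log structure from $\shD$ to $D_0$.''} Concretely, the substitution $f \mapsto fu$ replaces the vertical differential $\wedge\,d\log t$ in \eqref{barA1} by $\wedge(d\log t + d\log u)$, and since $d\log u$ is a regular $1$-form I expect this perturbation to determine an explicit filtered chain isomorphism of $\bar A^\bullet$ preserving both the Hodge and weight filtrations. Once this verification is in place, combining it with the two earlier steps and Proposition~\ref{depends-only-on-M_Z} closes the argument.
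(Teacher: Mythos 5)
Your reduction to Proposition~\ref{depends-only-on-M_Z} is the right starting point, but the ``key observation'' in your first step is false, and it fails at exactly the point where the substance of the theorem lies. It is true that the flopped $\PP^1$ is not a divisor and adds no new component to $\tilde D_0$, so the characteristic monoid $\shM_Z/\shO_Z^\times$ and the local picture at each $p\in Z$ are unaffected. But $\shM_Z$ is not determined by its characteristic monoid: by the construction of \S\ref{S-global-topology} it is assembled from the line bundles $\shL_i$ with their maps to $\shO$, i.e.\ from the (duals of the) normal bundles of $Z$ in the components of $\tilde D_0$, and these are global invariants that the two small resolutions do \emph{not} share. At a node $p\in Z\cap X_2$ where the local components are $D_1,D_2$, blowing up $D_1$ replaces $D_1$ by its blow-up at $p$ while leaving $D_2$ untouched, so the pair $(\shL_1,\shL_2)$ acquires a twist by $\shO(p)$ in the first factor for one resolution and in the second factor for the other; only the tensor product, which carries the section $f$, is common to both (cf.\ Remark~\ref{normal-bundles}). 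The two log structures $\shM_Z$ are therefore genuinely different in general, and your assertion that they are ``literally the same'' conflates local isomorphism with global equality. The same conflation undermines your second step: the twist points $Z\cap X_2$ move with $X_2$, so identifying $\shM_Z$ ``locally at each $p$'' does not identify it canonically.

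Consequently the theorem cannot follow from Proposition~\ref{depends-only-on-M_Z} alone; one must show that the cohomological mixed Hodge complex is insensitive to the difference between the two log structures, and this is the step your argument is missing. The paper supplies it as follows: the complex part is the residue complex \eqref{barA2}, which depends only on $Z$ and not on the $\shL_i$ at all; and the integral part is controlled by $R^1(r|_{r^{-1}(C)})_*\ZZ$ on each open stratum $C=D^\circ_{i,j}$, which by Lemma~\ref{lemma-orientation-S1-bundles} is isomorphic to $\ZZ_C$ via the orientation fixed by the ordering $i<j$ --- an identification that does not see which of the two circle bundles (twisted or not) realizes it, and hence not the choice of resolution or of $X_2$. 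By contrast, the unit ambiguity in $f$ that you single out as the hard part is comparatively harmless: a coordinate change on $\DD$ multiplies $f$ by a unit whose restriction to the central fibre is a constant, which is why the paper dismisses this issue already in \S\ref{S-global-topology}.
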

\begin{proof}
We have seen that $Z$ is independent of $\rho$. Since $Z=\Sing D_0$ and $D_0$ is independent of $X_2$, so is $Z$. It remains to show the independence of $\shF_Z$.
By Prop.~\ref{depends-only-on-M_Z}, the cohomological mixed Hodge complex on $\shF_Z$ depends only on $\shM_Z$ which in turn is determined by the $\shL_i$ that are given by normal bundles of the double intersection locus of $D_0$ in the components of $D_0$. From this we deduce that $\shF_Z$ is independent of the choice of $X_2$ as it so far only depends on $D_0=X_0\cap X_1$ and $X_2$ is only being used to ensure the existence of suitable gluings of the normal bundles to obtain the $\shL_i$ and the existence of the section $f$. 

Let us consider the local situation near a singularity $p$ of $\shD$. Let $D_1,D_2$ be the two components of $D_0$ meeting the singularity and let $\bar\shL_i$ be the normal bundle of $Z=D_1\cap D_2$ in $D_i$. If we blow up $D_1$ in $\shD$ and denote by $\tilde D_1$ its proper transforms, we have another normal bundle $\tilde\shL_1$ of $Z=\tilde D_1\cap D_2$ in $\tilde D_1$ and a calculation shows $\tilde\shL_1=\bar\shL_1(p)$. Similarly if we instead chose to blow up $D_2$ to resolve the singularity $p$, we would obtain a new normal bundle of $Z$ inside $\tilde D_2$ which is $\tilde\shL_2=\bar\shL_2(p)$. So we need to compare the two log structures given by the resulting pair of normal bundles $\shL_1,\shL_2$ that is either
$$\begin{array}{rclrcl}
\shL_1&=&\bar\shL_1,&\shL_2&=&\bar\shL_2(p)\quad \hbox{or}\\
\shL_1&=&\bar\shL_1(p)&\shL_2&=&\bar\shL_2.
\end{array}$$
The two resulting log structures $\shM_Z$ are different in general. However, the cohomological mixed Hodge complex will be independent. That this is true for its complex part is seen from \eqref{barA2} which only depends on $Z$ in fact. 
The relevant part of $\shM_Z$ for the integral part is $R^1(r|_{r^{-1}(Z)})_*\ZZ$. The choice of resolution $\tilde \shD$ could at most matter at $\Sing\shD$ which is a set of points contained in $Z$ away from the triple points.
On $C=D_{i,j}^\circ$ for some $i,j$ we have
$$\shF_Z=\cone(\ZZ_{C}\ra R(r|_{r^{-1}(C)})_*\ZZ)[1]$$
and this is canonically isomorphic to $\coker(\ZZ_{C}\ra (Rr|_{r^{-1}(C)})_*\ZZ)[1]=R^1(r|_{r^{-1}(C)})_*\ZZ$
which is by Lemma~\ref{lemma-orientation-S1-bundles} isomorphic to $\ZZ_C$ and the isomorphism is given by a choice of orientation of the circle bundle ${r^{-1}(C)}\ra C$. The choice of orientation is induced by the ordering $i<j$. 
Hence, $\shF_Z$ is independent of the choice of resolution $\tilde\shD$.
\end{proof}

\subsection{Batyrev's mirror construction}
A polytope $\Xi\subset \RR^n$ whose interior contains the origin and whose vertices are contained in $\ZZ^n$ is called \emph{reflexive} if the vertices of its polar dual polytope
$$\check\Xi=\{v\in\Hom(\RR^n,\RR) \mid v(x)\ge -1\hbox{ for all }x\in\Xi\}$$ are contained in $\Hom(\ZZ^n,\ZZ)$, see \cite[Def. 4.1.5]{Ba94}. 
In this case $\check\Xi$ is also reflexive with dual $\Xi$.
\begin{definition}
For $\Xi$ a reflexive polytope with polar dual $\check\Xi$, we call $\Xi,\check\Xi$ \emph{a dual pair of reflexive polytopes}.
\end{definition}
From now on let $\Xi$ be a reflexive polytope.
There is a natural inclusion-reversing duality of proper faces of $\Xi$ and of $\check \Xi$ sending a face $F\subset \Xi$ of dimension $d$ to the face 
$$\check F=\{v\in\check\Xi \mid v(x)=-1\hbox{ for all }x\in F\}$$ of dimension $n-1-d$, see \cite[Prop. 4.1.7]{Ba94}. 
A regular triangulation $\shT$ of the boundary of a reflexive polytope $\Xi$ is called MPCP if every simplex is elementary, i.e. its vertices are the only lattice points contained in it; similarly for a triangulation $\check \shT$ of $\check\Xi$. 
Let $\PP_\Xi$ denote the toric variety associated to $\Xi$. Its fan $\Sigma$ is given by the set of cones over the faces of $\check\Xi$, so an MPCP triangulation of $\check\shT$ gives a refinement of $\Sigma$ whose associated toric variety $\PP$ is a maximal projective partial crepant resolution $\rho:\PP\ra\PP_\Xi$. 
The inverse image under $\rho$ of a torus in $\PP_\Xi$ corresponding to a face $\check F\subset\check\Xi$ is the union of torus orbits in $\PP$ that correspond to the simplices of $\shT$ whose relative interior is contained in the relative interior of $\check F$.

\begin{proposition}
\label{Batyrev-setup} 
Let $\Xi,\check\Xi$ be a dual pair of reflexive four-dimensional polytopes and $\shT,\check\shT$ MPCP triangulations of $\partial\Xi,\partial\check\Xi$ respectively.
Let $X_1,X_2$ be the pullback under $\rho$ of general hyperplane sections in $\PP_\Xi$ and let $X_0$ be the toric boundary divisor in $\PP$ 
then $X_0,X_1,X_2$ are linearly equivalent and satisfy Assumption~\ref{mainsetup}, so give rise to $\shX,\shX',\shD,D_0$ and a perverse curve $(Z,\shF_Z)$.
By duality, an analogous constructions can be made when replacing $\Xi$ by $\check \Xi$ giving a perverse curve $(\check Z,\shF_{\check Z})$. 
We say $(Z,\shF_Z)$ and $(\check Z,\shF_{\check Z})$ are a pair of perverse curves from the Batyrev construction.
\end{proposition}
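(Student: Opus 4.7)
The plan is to verify Assumption~\ref{mainsetup} for the triple $(X_0,X_1,X_2)$ coming from Batyrev's data; once that is done, the constructions of \S\ref{S-linsys-to-pc} immediately yield $\shX,\shX',\shD,D_0$ and $(Z,\shF_Z)$, and the identical argument applied to $(\check\Xi,\check\shT)$ produces $(\check Z,\shF_{\check Z})$.

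For linear equivalence I would invoke \cite{Ba94}: $\PP_\Xi$ is a Gorenstein Fano toric variety whose anticanonical sections are spanned by characters $\chi^m$ indexed by the lattice points of $\Xi$. Because $\check\shT$ is MPCP, the induced refinement makes $\rho:\PP\to\PP_\Xi$ projective and crepant, so $\PP$ is again Gorenstein with $-K_\PP=\rho^*(-K_{\PP_\Xi})$ represented by the toric boundary $X_0=\sum_i D_i$; thus $X_0,X_1,X_2\in|-K_\PP|$. Batyrev's construction further guarantees that $\PP$ has at worst isolated Gorenstein terminal singularities, situated at torus-fixed points where four components of $X_0$ meet; by the introduction these do not enter $\shD$. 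On the smooth locus of $\PP$, the toric boundary $X_0$ is automatically SNC.

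The main---and only serious---obstacle is a Bertini-type argument: for general $X_1,X_2\in|-K_\PP|$ I need $X_1,X_2$ smooth on $\PP\setminus\Sing\PP$, meeting each other transversally, and meeting every toric stratum of $X_0$ transversally, so that $\bigcap_{i\in I}X_i$ is SNC of dimension $4-|I|$ for each $I\subseteq\{0,1,2\}$. The key input is that the restriction of $|-K_\PP|$ to a torus orbit $O_\tau\subset X_0$ is spanned by the characters $\chi^m$ for $m$ ranging over the face $F\subset\Xi$ dual to the face of $\check\Xi$ generating the cone $\tau$, and this restricted system is base-point free on $O_\tau$ because each character is a nowhere-vanishing function on the torus. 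I would then stratify $\PP$ by torus orbits and run Bertini orbit-by-orbit on the closure of each orbit meeting $X_0$: a general $X_1$ is smooth on $\PP\setminus\Sing\PP$ and transverse to every stratum, the same holds for $X_2$, and a general pair is mutually transverse on each stratum. This verifies Assumption~\ref{mainsetup}, and the dual statement for $(\check Z,\shF_{\check Z})$ follows by swapping the roles of $\Xi$ and $\check\Xi$.
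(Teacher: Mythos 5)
Your proposal is correct and follows essentially the same route as the paper: linear equivalence via the anticanonical/proper-transform description, smoothness of $X_1,X_2$ because the singularities of $\PP$ are isolated (at the four-fold points of $X_0$) and general members miss them, and transversality by a Bertini/generality argument. The only difference in emphasis is that you spell out the Bertini step orbit-by-orbit (using base-point-freeness of the restricted system on each torus orbit) while compressing the reason $X_0$ is normal crossing, which the paper derives explicitly from the fact that elementary $2$-simplices are unimodular, so the facets of the maximal cones are standard and $\PP$ is smooth along the triple-intersection strata of $X_0$.
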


\begin{proof} The linear equivalence follows because $X_0,X_1,X_2$ are proper transforms of linearly equivalent hypersurfaces.
By \cite[Cor. 4.2.3]{Ba94}, $X_1$ and $X_2$ are smooth. The reasoning of loc.cit. is that $\PP$ has at most isolated singularities and general hypersurfaces don't meet these.
The MPCP property requires the maximal cones in the fan of $\PP$ to be simplicial cones that are cones over lattice simplices whose only integral points are its vertices. In particular the facets of such cones will be cones over 2-simplices with this property and such are isomorphic to standard cones $\RR_{\ge 0}^3$ with the usual integer lattice. Since $X_0$ is locally given by such facets, we deduce that it is normal crossing.
That various intersections between $X_0,X_1,X_2$ are normal crossing also follows from the generality assumption on $X_1,X_2$ which ensures that they meet each other and $X_0$ transversely.
\end{proof}

\subsection{Proof of Thm~\ref{maintheorem}}
We are going to prove (1),(2),(3) of Thm.~\ref{maintheorem} separately in Thm.~\ref{mainthm-1}, Thm.~\ref{mainthm-2}, Thm.~\ref{mainthm-3}.
\begin{lemma} 
\label{lem_h1=0}
Let $\PP,X_1,X_2$ be as in Prop.~\ref{Batyrev-setup}.
Let $D$ be the intersection $X_1\cap X_2$ then
$H^1(D,\CC)=0$.
\end{lemma}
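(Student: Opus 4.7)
The plan is to apply the classical Lefschetz hyperplane theorem in a smooth setting to reduce the vanishing to the well-known fact that $H^1(X_1, \CC) = 0$ for a Batyrev Calabi-Yau threefold $X_1$.

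The first step will be to verify that $D = X_1 \cap X_2$ is a smooth \emph{ample} divisor in the smooth projective threefold $X_1$. By Proposition~\ref{Batyrev-setup} and the generality assumption on $X_1, X_2$, both hypersurfaces avoid the (isolated) singular locus of $\PP_\Xi$; hence the MPCP resolution $\rho\colon\PP \to \PP_\Xi$ restricts to isomorphisms $X_i \stackrel{\sim}{\to} X_i^0 \subset \PP_\Xi$ onto smooth ample hypersurfaces, and similarly $D \stackrel{\sim}{\to} X_1^0 \cap X_2^0$. Since $X_2^0$ is ample in $\PP_\Xi$, the restriction $X_2^0|_{X_1^0}$ is ample in the smooth threefold $X_1^0$, and transporting back along the isomorphism, $D$ is ample in $X_1$.

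Given ampleness, I would invoke the classical Lefschetz hyperplane theorem for a smooth ample divisor in a smooth projective threefold: the restriction $H^i(X_1, \CC) \to H^i(D, \CC)$ is an isomorphism for $i < 2$, so in particular $H^1(D, \CC) \cong H^1(X_1, \CC)$. The right-hand side vanishes because $X_1$ is a Batyrev Calabi-Yau threefold with $h^{1,0}(X_1) = 0$: this is part of the standard Hodge number computation \cite{Ba94}, and can alternatively be seen from simple connectedness of $X_1$ as a smooth ample hypersurface in the simply connected projective $\PP$ via Lefschetz for $\pi_1$, giving $H^1(X_1, \ZZ) = 0$.

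The main point requiring care will be the ampleness of $D$ in $X_1$: a priori $D$ arises as the restriction of a pullback under a (possibly small) birational morphism, hence is only manifestly nef. Ampleness is rescued by the observation that $X_1$ lies entirely in the locus where $\rho$ is an isomorphism, so the ample divisor $X_2^0|_{X_1^0}$ transports across this isomorphism as an ample divisor; everything downstream is then standard.
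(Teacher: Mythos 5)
Your argument hinges on the claim that $X_1$ and $X_2$ lie in the locus where the MPCP resolution $\rho\colon\PP\to\PP_\Xi$ is an isomorphism, so that $D$ becomes an \emph{ample} divisor in the threefold $X_1$. This is false in general, and it conflates two different singular loci. What Prop.~\ref{Batyrev-setup} (following \cite[Cor.~4.2.3]{Ba94}) uses is that $\Sing\PP$ consists of isolated points, which a general hypersurface avoids; but $\Sing\PP_\Xi$ is a union of toric strata of dimension up to $2$ (present whenever a $2$-face of $\check\Xi$ dual to an edge of $\Xi$, or an edge of $\check\Xi$ dual to a $2$-face, carries extra lattice points), and a general anticanonical hypersurface $X_1^0\subset\PP_\Xi$ necessarily meets the $2$-dimensional strata in curves. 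Consequently $\rho|_{X_1}\colon X_1\to X_1^0$ is a nontrivial crepant resolution rather than an isomorphism --- this is the whole point of the MPCP resolution in Batyrev's construction (for the mirror quintic it raises $h^{1,1}$ from $1$ to $101$). The divisor $D=\rho^*X_2^0|_{X_1}$ then satisfies $D\cdot C=X_2^0\cdot\rho_*C=0$ for every curve $C$ contracted by $\rho|_{X_1}$, and such curves meeting $D$ exist because the general $X_2^0$ meets the curve $\Sing X_1^0$ in finitely many points over which the fibres of $\rho|_{X_1}$ are positive-dimensional. So $D$ is nef and big but not ample in $X_1$, and the classical Lefschetz hyperplane theorem does not apply as stated; your argument is only valid when $\PP_\Xi$ is already smooth, which excludes essentially all mirror partners.

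The conclusion might be rescued by a Lefschetz theorem with large fibres (Goresky--MacPherson) for the semiample big divisor $D$, but that requires controlling the fibre dimensions of $\rho|_{X_1}$ and checking the defect estimate, which is no longer ``standard''. Alternatively one could observe that $D\to D^0:=X_1^0\cap X_2^0$ is a resolution of a surface with isolated ADE singularities, so $H^1(D)\cong H^1(D^0)$, and then prove vanishing downstairs --- but that again needs a Lefschetz-type input on the singular variety $\PP_\Xi$. The paper sidesteps all of this: it reduces via Poincar\'e duality to $H^3(D,\CC)=0$, splits $D$ along the open torus $T\subset\PP$, and kills the map $H^3_c(D\cap T)\to H^3_c(D)$ by a weight argument: the source has weight $2$ by the Bernshte\v{\i}n Lefschetz theorem of Danilov--Khovanskii for complete intersections in tori, the target is pure of weight $3$, and morphisms of mixed Hodge structures are strict.
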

\begin{proof} We set $\bar D=\rho(D)$.
By Poincar\`e duality, we may as well show $H^3(D,\CC)=0$.
Let $T\subset\PP$ be the open torus orbit and $B$ its complement.
By the long exact sequence 
$$..\ra H_c^3( D\cap T)\ra H_c^3( D) \ra H_c^3( D\cap B)\ra..$$
and the vanishing of $H_c^3( D\cap B)$ (as $ D\cap B$ is a curve)
it suffices to show that the map $H_c^3( D\cap T)\ra H_c^3( D)$ is trivial.
Indeed, the Hodge structure in the target is pure of weight $3$ but that of $H_c^3( D\cap T)$ is concentrated in weight two. 
This follows from $H_c^3( D\cap T)=H_c^3(\bar D\cap T)$, the isomorphism $H_c^3(\bar D\cap T)\ra H_c^7(T)$ of Hodge type $(2,2)$ given by Bernshte\v{\i}n's Lefschetz Theorem \cite[Thm. 6.4]{DK86} and finally 
$H_c^7(T)=H^1(T)^*$ is pure of type $(3,3)$ as $H^1(T)$ is pure of type $(1,1)$. 
The latter follows from the K\"unneth formula and the fact that $H^1(\CC^*)$ and $H^0(\CC^*)$ are pure of type $(1,1)$ and $(0,0)$ respectively which in turn can be deduced via compactifying $\CC^*$ to $\PP^1$.
Note that in general $h^{p,q}H^i(U)=h^{n-p,n-q}H_c^{2n-i}(U)$, for smooth $U$ of dimension $n$, e.g. via \cite[1.4 f)]{DK86}.
\end{proof}

Let $\shD$ denote the degenerating family of surfaces $\shX'\cap X_1$ via the construction in \S\ref{S-linsys-to-pc} applied to the setup in Prop.~\ref{Batyrev-setup}.
Let $D_0$ be the central fibre of $\shD$ and $\Gamma$ denote the $1$-skeleton of the dual intersection complex of $D_0$, i.e. $\Gamma$ is a graph with a vertex for each component of $D_0$ and an edge between two vertices if and only if the corresponding components meet in a curve.

\begin{proposition} 
\label{prop-rk-delta}
Let $b_1(\Gamma)$ denote the first Betti number of $\Gamma$ and let $\delta^*$ be the map $H^0(D^3)\ra H^2(D^2)$ in the $E_1$-term of the weight filtration \eqref{E2weightss} for the cohomology of the perverse curve constructed from $\shD$. 
We have
$$\rank\delta^* = b_1(\Gamma).$$
\end{proposition}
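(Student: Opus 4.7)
My plan is to identify $\delta$, together with the analogous alternating restriction
$$\delta^{(0)}\colon H^0(D^1,\QQ)\to H^0(D^2,\QQ),\qquad \delta^{(0)}(\beta)_{i_1,i_2}=(\beta_{i_2}-\beta_{i_1})|_{D_{i_1,i_2}},$$
with the simplicial coboundary maps of the dual intersection complex $\Delta$ of $D_0$, whose $k$-simplices are the components of $D^{k+1}$. Since each component of $D^{k+1}$ is connected (a point when $k=2$), the identifications $H^0(D^{k+1},\QQ)=C^k(\Delta,\QQ)$ transport $(\delta^{(0)},\delta^{(1)}=\delta)$ to the simplicial cochain differential of $\Delta$. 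Moreover $\delta^*$ is Poincar\'e dual to $\delta$ via $H^2(D^2)\cong H^0(D^2)^\vee$ on the disjoint union of smooth projective curves $D^2$, so $\rank\delta^*=\rank\delta$ and it suffices to compute the latter in terms of $\Delta$.

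A rank--nullity calculation on the three-term complex $C^\bullet(\Delta,\QQ)$ gives
$$\rank\delta \;=\; e-\dim\ker\delta^{(1)} \;=\; e-\dim\im\delta^{(0)}-b_1(\Delta) \;=\; e-v+b_0(\Delta)-b_1(\Delta).$$
Since $\Gamma$ is the $1$-skeleton of $\Delta$, it has the same vertices, edges and connected components as $\Delta$, so $b_1(\Gamma)=e-v+b_0(\Delta)$ and therefore $\rank\delta = b_1(\Gamma)-b_1(\Delta)$. The proposition thus reduces to the single assertion $b_1(\Delta)=0$.

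This vanishing is the main (and only non-combinatorial) obstacle. I would deduce it from Lemma~\ref{lem_h1=0}, which supplies $H^1(D,\CC)=0$ for the generic fiber $D=X_1\cap X_2$ of $\tilde\shD\to\PP^1$. Since $\tilde\shD\to\PP^1$ is a semistable degeneration (smooth total space, reduced simple normal crossing central fiber) and the small resolution $\rho\colon\tilde\shD\to\shD$ only replaces isolated ordinary double points along $Z$ without introducing new components of the special fiber or new double curves, the dual intersection complex of $\tilde D_0$ coincides with $\Delta$. Steenbrink's weight spectral sequence for the limit mixed Hodge structure then produces a canonical isomorphism
$$\Gr^W_0 H^1(D,\QQ) \;\cong\; H^1(\Delta,\QQ),$$
and the vanishing of the left hand side forced by $H^1(D,\CC)=0$ yields $H^1(\Delta,\QQ)=0$, i.e.~$b_1(\Delta)=0$. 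The delicate point to verify carefully is this identification of the weight-zero piece with $H^1(\Delta)$, standard for semistable degenerations but to be invoked for the specific $\tilde\shD$ obtained here.
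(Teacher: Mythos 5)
Your proposal is correct and follows essentially the same route as the paper: both identify the three-term sequence $(\delta',\delta)$ with the cochain complex of the dual intersection complex, reduce via Poincar\'e duality to computing $\rank\delta$, and obtain the key vanishing of the first cohomology of that complex from Lemma~\ref{lem_h1=0} via the weight spectral sequence of the nearby fibre (the paper phrases this as $\ker\delta/\im\delta'$ being a direct summand of $H^1$ of the nearby fibre). The only cosmetic difference is that you write $\rank\delta=b_1(\Gamma)-b_1(\Delta)$ and then show $b_1(\Delta)=0$, whereas the paper uses connectedness to get $\rank\delta'=v-1$ and concludes $\rank\delta^*=1+e-v$.
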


\begin{proof} 
The map $\delta^*$ is Poincar\'e dual to the alternating restriction map $\delta$ from \eqref{defdelta} that fits in a sequence 
\begin{equation}
\label{dual-int-sequence}
H^0(D^1)\stackrel{\delta'}{\ra} H^0(D^2)\stackrel{\delta}{\ra} H^0(D^3)
\end{equation}
where $D^1$ denotes the disjoint union of the components of $D_0$ and $\delta'$ an alternating restriction map defined similar to \eqref{defdelta}, see \cite[Lemma 4.7,(4)]{GKR12}. 
This sequence computes the cohomology of the dual intersection complex of $D_0$. It also appears as the bottom row $(m+k=0)$ of the $E_1$-term 
\begin{equation}
E_1^{-k,m+k}= \bigoplus_{q>0,-k} H^{m-2q-k}(D^{2q+k+1},\CC)\langle -q-k\rangle\Rightarrow H^m(D_0,\psi_f\CC)
\label{wss-nearby}
\end{equation}
of the weight spectral sequence computing the cohomology of the nearby fibre of $D_0$.
See \cite[Fig. 8]{GKR12} which shows a version of this for degenerations of threefolds, see \cite[(XI--29)]{PS08} for a reference of \eqref{wss-nearby}. 
It degenerates at $E_2$ by \cite[III, Scholie (8.1.9), (iv)]{DelTH}.
The cohomology $\ker\delta / \im\delta'$ is a direct summand of the first cohomology of the nearby fibre and thus vanishes by Lemma~\ref{lem_h1=0}. 
We conclude that 
$$e-\rank\delta^* = \dim\coker\delta^* = \dim\ker\delta = \rank\delta'$$
where $e=\dim H^2(D^2)=\dim H^0(D^2)$.
Note that $e$ is the number of edges of $\Gamma$ and $v=\dim H^0(D^1)$ the number of vertices.
As the dual intersection complex is connected, the kernel of $\delta'$ has rank one and thus $\rank\delta'=v-1$. 
We conclude that 
\begin{equation}
\rank\delta^* = 1+e-v
\label{1+e-v}
\end{equation}
which coincides with $b_1(\Gamma)$ as $\Gamma$ is connected.
\end{proof}

\begin{theorem} 
\label{mainthm-2}
Let $(Z,\shF_Z)$ be a perverse curve constructed via Prop.~\ref{Batyrev-setup}.
Let $v,e$ be the number of vertices and edges of the dual intersection complex of $D_0$. Note that $e$ coincides with the number of components of $Z$.
Let $n$ be the number of triple points in $Z$ and $g$ the sum of the genera of the components of $Z$. We have
$$h^{1,0}(Z,\shF_Z)= h^{0,1}(Z,\shF_Z)=n+g-b_1(\Gamma)=n+g+v-e-1,$$
$$h^{0,0}(Z,\shF_Z)= h^{1,1}(Z,\shF_Z)=e-b_1(\Gamma)=v-1.$$
\end{theorem}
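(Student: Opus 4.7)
The plan is to read off the Hodge numbers directly from the description of $\HH^\bullet(Z,\shF_Z\otimes\CC)$ in Prop.~\ref{Hodge-pc}, combined with the rank computation in Prop.~\ref{prop-rk-delta}. Since $D^2 = \coprod_{i=1}^e Z_i$ and $D^3$ consists of the $n$ triple points of $Z$, we have $\dim H^0(D^2) = \dim H^2(D^2) = e$ and $\dim H^0(D^3) = n$. Moreover, by the definition of $\delta^*$ as the Poincar\'e dual of $\delta$, the ranks agree: $\rank \delta^* = \rank \delta$. Prop.~\ref{prop-rk-delta} then supplies $\rank \delta^* = b_1(\Gamma) = 1 + e - v$.

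I will first handle $h^{0,0}$ and $h^{1,1}$. From Prop.~\ref{Hodge-pc}, $\HH^0(Z,\shF_Z\otimes\CC) = \ker\delta$ and $\HH^2(Z,\shF_Z\otimes\CC) = \coker\delta^*$, both of which are pure of type $(0,0)$ and $(1,1)$ respectively. Hence
\[
h^{0,0} = \dim\ker\delta = e - \rank\delta = e - b_1(\Gamma) = v - 1,
\]
\[
h^{1,1} = \dim\coker\delta^* = e - \rank\delta^* = v - 1.
\]

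Next I treat $h^{1,0}$ and $h^{0,1}$. The two short exact sequences in Prop.~\ref{Hodge-pc} give
\[
h^{1,0} = \sum_{i=1}^M h^{1,0}(Z_i) + \dim\coker\delta = g + (n - b_1(\Gamma)) = n + g - b_1(\Gamma),
\]
\[
h^{0,1} = \dim\ker\delta^* + \sum_{i=1}^M h^{0,1}(Z_i) = (n - b_1(\Gamma)) + g = n + g - b_1(\Gamma),
\]
using that $\sum_i h^{1,0}(Z_i) = \sum_i h^{0,1}(Z_i) = g$ for the normalizations of the components of $Z$ (and noting that the components are projective curves, so usual Hodge symmetry applies). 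Substituting $b_1(\Gamma) = 1 + e - v$ yields $n + g + v - e - 1$, completing the formula.

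There is essentially no new obstacle here: all the substantive work has already been done in Prop.~\ref{Hodge-pc} (Hodge decomposition of $\HH^\bullet(Z,\shF_Z\otimes\CC)$) and Prop.~\ref{prop-rk-delta} (identification of $\rank\delta^*$ with $b_1(\Gamma)$ via the vanishing of $H^1$ of the nearby fibre, Lemma~\ref{lem_h1=0}). The only point that needs a brief justification is the Poincar\'e duality identity $\rank\delta = \rank\delta^*$, which is immediate from the definition of $\delta^*$ as the Poincar\'e dual of $\delta$ and the fact that $D^2$ is a disjoint union of smooth projective curves (hence satisfies Poincar\'e duality componentwise) and $D^3$ is a finite set of points. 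The remainder is arithmetic with dimensions.
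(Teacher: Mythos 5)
Your proof is correct and follows exactly the route the paper takes: its own proof of this theorem is a one-line citation of Lemma~\ref{Poincare-pc}, Prop.~\ref{Hodge-pc}, Prop.~\ref{prop-rk-delta} and \eqref{1+e-v}, and your argument is just the worked-out dimension count behind that citation. (A small cosmetic point: the components $Z_i$ are already smooth in this setting, so no normalization is needed.)
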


\begin{proof} 
This follows from Lemma~\ref{Poincare-pc}, Prop.~\ref{Hodge-pc}, Prop.~\ref{prop-rk-delta} and \eqref{1+e-v}.
\end{proof}

Recall the following well-known fact.\\
\begin{minipage}[b]{0.70\textwidth} 
\begin{lemma} 
A curve of genus $g\ge 2$ decomposes into $2g-2$ many pairs of pants by removing $3g-3$ suitably chosen disjoint circles from it.
\end{lemma}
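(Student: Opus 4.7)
The plan is to combine an inductive cutting argument with an Euler characteristic count. First, starting from the closed oriented surface $Z$ of genus $g\ge 2$, I would choose a maximal collection $\mathcal{C}=\{\gamma_1,\dots,\gamma_k\}$ of pairwise disjoint, pairwise non-isotopic, essential (i.e. not bounding a disc) simple closed curves on $Z$. Such a collection exists and is finite because isotopy classes of disjoint essential simple closed curves on a compact surface form a finite set up to complexity bounds (the complexity $-\chi + \#\partial$ strictly decreases after cutting along an essential curve and is bounded below).

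Next I would cut $Z$ along $\mathcal{C}$ to obtain a disjoint union $Z\setminus\mathcal{C}=\bigsqcup_{j} P_j$ of compact surfaces with boundary, and identify each $P_j$ as a pair of pants. This uses the classification of compact orientable surfaces with boundary: the only such surface that contains no essential, non-boundary-parallel simple closed curve is the three-holed sphere (since a disc or annulus would mean one of the $\gamma_i$ bounds a disc or is isotopic to another $\gamma_j$, and a surface with $-\chi\geq 2$ always admits an essential non-peripheral curve). Maximality of $\mathcal{C}$ therefore forces each $P_j$ to be a pair of pants.

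Finally I would count using additivity of Euler characteristic under cutting along circles (circles have $\chi=0$). Writing $p$ for the number of pants and $k=|\mathcal{C}|$, one has
\begin{equation*}
2-2g \;=\; \chi(Z) \;=\; \sum_j \chi(P_j) \;=\; -p,
\end{equation*}
so $p=2g-2$. Each pair of pants contributes three boundary circles, and each curve $\gamma_i$ is two-sided and becomes the boundary of exactly two pant-boundary components (possibly of the same pant), giving $3p=2k$ and hence $k=3g-3$.

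The main obstacle is the structural step identifying the pieces $P_j$ as pants; once that classification statement is in hand, the rest is a straightforward Euler-characteristic and boundary-counting exercise.
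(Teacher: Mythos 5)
The paper offers no proof of this lemma at all: it is introduced with ``Recall the following well-known fact'' and accompanied only by a picture of a genus-two surface cut into two pairs of pants. So there is nothing to compare against; your argument stands on its own, and it is the standard one: take a maximal collection of disjoint, pairwise non-isotopic, essential simple closed curves, argue that maximality forces every complementary piece to be a three-holed sphere, and then recover the counts $p=2g-2$ and $k=3g-3$ from $\chi(Z)=2-2g$, $\chi(\text{pants})=-1$, and the incidence relation $3p=2k$. That is correct and complete in outline, and the Euler-characteristic bookkeeping is exactly right (including the observation that an annulus piece would force either a curve bounding a disc, two isotopic curves, or a closed torus component).

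One small imprecision in the structural step: your parenthetical justification rules out the disc, the annulus, and any piece with $-\chi\ge 2$, but the one-holed torus has $-\chi=1$ and is neither a disc, an annulus, nor a pair of pants, so it is not covered by the cases you list. It is of course harmless --- a one-holed torus contains a non-separating essential, non-peripheral simple closed curve (a core curve of the handle), so maximality excludes it too --- but the clean statement you want is: the only compact orientable surfaces with boundary containing no essential, non-boundary-parallel simple closed curve are the disc, the annulus, and the pair of pants. With that classification quoted correctly, your proof is complete.
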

\end{minipage}\qquad
\begin{minipage}[t]{0.26\textwidth}
\resizebox{0.9\textwidth}{!}{
\begin{picture}(0,0)%
\includegraphics{decompose.pstex}%
\end{picture}%
\setlength{\unitlength}{4144sp}%
\begingroup\makeatletter\ifx\SetFigFont\undefined%
\gdef\SetFigFont#1#2#3#4#5{%
  \reset@font\fontsize{#1}{#2pt}%
  \fontfamily{#3}\fontseries{#4}\fontshape{#5}%
  \selectfont}%
\fi\endgroup%
\begin{picture}(1269,685)(5366,-5158)
\end{picture}%

}
\end{minipage}\\[4mm]
Thus, degenerating a smooth curve of genus $g\ge2$ into a stable curve with components being $\PP^1$s we find that 
$$g=d-c+1$$ 
where $d=3g-3$ is the number of double points and $c=2g-2$ the number of components. 
This formula also holds for genus $g=0,1$ if we degenerate an elliptic curve into a chain of $\PP^1$s. 
Also a further degeneration by contracting a circle in a $\PP^1$ preserves this formula.
Degenerating each component of $Z$ yields
\begin{equation}
\label{eq-tildee}
h^{1,0}(Z,\shF_Z)= h^{0,1}(Z,\shF_Z)=n+d+v-\tilde e,
\end{equation}
where $d$ is the total count of all double points and $\tilde e$ is the total number of $\PP^1$s.

\begin{lemma}
\label{countZdata} 
Let $\shT$ be a MPCP triangulation of $\partial\Xi$ and $\check \shT$ be a MPCP triangulation of $\partial\check \Xi$. 
For a face $F\subset\partial\Xi$ (resp. $F\subset\partial\check\Xi$) and $i\ge0$, let $s^i(F)$ denote the number of $i$-dimensional simplices in $\shT$ (resp. $\check\shT$) which intersect the relative interior of $F$ non-trivially. For an edge $F\subset\Xi$ we also use the notation $\len(F)=s^1(F)$.
\begin{enumerate}
\item For $F\subset\partial\Xi$ (resp. $\subset\partial\check \Xi$) a $2$-face, $s^i(F)$ is independent of $\shT$ (resp. $\check\shT$) for $i=0,1,2$. Moreover,
$$ s^0(F) - s^1(F) + s^2(F) = 1.$$
\item $$\tilde e=\sum_{{F\subset\Xi}\atop{\dim F=2}} s^2(F) \len(\check F) + \sum_{{\check F\subset\check\Xi}\atop{\dim \check F=2}}  s^1(\check F) \len(F)$$
\item $$d+n=\sum_{{F\subset\Xi}\atop{\dim F=2}} s^1(F) \len(\check F) + \sum_{{\check F\subset\check\Xi}\atop{\dim \check F=2}}  s^2(\check F) \len(F)$$
\item $$n+d-\tilde e=\sum_{{F\subset\Xi}\atop{\dim F=2}} (s^0(F)-1) \len(\check F) - \sum_{{\check F\subset\check\Xi}\atop{\dim \check F=2}}  (s^0(\check F)-1) \len(F) $$
\item $$v=\#\{\hbox{vertices of }\check\Xi\}+ \sum_{{F\subset\Xi}\atop{\dim F=2}} s^0(\check F) + \sum_{{\check F\subset\check \Xi}\atop{\dim\check F=2}} s^0(\check F)\len(F)  $$
\end{enumerate}
\end{lemma}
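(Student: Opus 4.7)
\emph{Proof plan.} The plan is to interpret every quantity on the right-hand sides via the toric dictionary relating cones of $\Sigma$ (equivalently, simplices of $\check\shT$) to toric strata in $\PP$, and the duality of faces $F \subset \Xi \leftrightarrow \check F \subset \check\Xi$ to Newton polytopes of restrictions of the defining section of $X_1$. Each quantity on the left will then be expressed as a sum of local contributions indexed by simplices of $\check\shT$.

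For part (1), let the $2$-face $F$ have $I$ interior and $B$ boundary lattice points. Pick's theorem gives area $A = I + B/2 - 1$, so every elementary triangulation contains $2A = 2I + B - 2$ triangles, yielding $s^2(F) = 2I + B - 2$ independently of $\shT$. Euler's identity $V - E + s^2(F) = 1$ for the triangulated disk with $V = I + B$ and exactly $B$ boundary edges yields $s^1(F) = E - B = I + s^2(F) - 1$, again independent of $\shT$. Then $s^0(F) = I$, and direct substitution gives $s^0 - s^1 + s^2 = 1$.

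For (2)--(5) I would employ the toric dictionary. For each lattice point $v \in \partial\check\Xi \setminus \{0\}$, the restriction $X_1|_{D_v}$ has Newton polytope equal to the face $\check F_v^*$ of $\Xi$ dual to the minimal face $\check F_v \ni v$ of $\check\Xi$, of dimension $3 - \dim \check F_v$. A case analysis on $\dim \check F_v \in \{0,1,2,3\}$ yields: one component of $D_0$ when $\dim \check F_v \le 1$ (Newton polytope of dimension $\ge 2$, generic section irreducible); exactly $\len(F)$ components when $\dim \check F_v = 2$ (Newton polytope an edge $F$, univariate polynomial factoring into $\len(F)$ linear factors); and no components when $\dim \check F_v = 3$, using that $X_1 = \rho^* H$ for $H$ a generic hyperplane avoiding the $0$-stratum point $\rho(D_v) \in \PP_\Xi$. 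Summing gives (5). The same argument applied to edges $e$ and $2$-simplices $\Delta$ of $\check\shT$ identifies the components of $Z$, respectively the triple points of $Z$, with those $e$, $\Delta$ whose minimal containing face $\check F_\bullet$ has dimension $\le 2$, and the corresponding curve $V(\tau_e) \cap X_1$ or $0$-scheme $V(\sigma_\Delta) \cap X_1$ has Newton polytope $\check F_\bullet^*$; counting points on the $0$-schemes yields $n = \sum_{\dim \check F = 2} s^2(\check F) \len(F)$.

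To separate $\tilde e$ from $d$ I would use a Mikhalkin-style tropical degeneration: the restriction of $\shT$ to the $2$-face $F = \check F_e^*$ is an elementary subdivision, and the corresponding degeneration of the Case~I curve $V(\tau_e) \cap X_1$ is a nodal curve with $c_e = s^2(F)$ components of $\PP^1$ type (one per elementary triangle) and $d_e = s^1(F)$ internal nodes (one per interior edge of the subdivision), whose consistency with Khovanskii's genus $g_e = s^0(F)$ is exactly (1). Case~II components are already $\len(F)$ disjoint $\PP^1$'s, so $c_e = \len(F)$ and $d_e = 0$. Summing over the $\len(\check F_1)$ edges of $\check\shT$ on each edge $\check F_1$ of $\check\Xi$, and over the $s^1(\check F_2)$ edges in each $2$-face $\check F_2$ of $\check\Xi$, yields (2); adding $d$ and $n$ yields (3); and (4) follows algebraically from (2) and (3) via the identity $s^1 - s^2 = s^0 - 1$ of (1) applied to both $\Xi$ and $\check\Xi$.

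The main obstacle is Case~III: showing rigorously that edges and $2$-simplices of $\check\shT$ whose minimal containing face is a $3$-face of $\check\Xi$ contribute nothing to $Z$ or its triple points. This hinges on $X_1, X_2$ being pulled back from $\PP_\Xi$ and a generic hyperplane section on $\PP_\Xi$ avoiding all $0$-dimensional torus-fixed points, so that the corresponding strata of $\PP$, which lie in the exceptional locus of $\rho$, are disjoint from $X_1$. A secondary subtlety is verifying compatibility of the Case~I tropical degeneration with the ambient compact toric surface $V(\tau_e)$: the boundary points of the generic curve on each boundary $\PP^1$ of $V(\tau_e)$ must match the triple points of $Z$ coming from the $2$-simplices $\Delta \supset e$.
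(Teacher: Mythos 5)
Your proposal is correct and follows essentially the same route as the paper: part (1) via Euler characteristic/Pick's theorem, and parts (2)--(5) via the toric dictionary matching cells of $\check\shT$ with strata of $\PP$, Newton polytopes of the restricted sections, and the degeneration of each genus-$s^0(F)$ component into $s^2(F)$ rational pieces with $s^1(F)$ nodes (your ``Mikhalkin-style'' degeneration is the paper's pair-of-pants decomposition). The two ``obstacles'' you flag are handled in the paper exactly as you suggest, by the genericity of the hyperplane section pulled back from $\PP_\Xi$.
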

\begin{proof} 
Note that $s^0(F) - s^1(F) + s^2(F)$ computes the Euler number of relative homology $H_\bullet(B^2,\partial B^2;\ZZ)$ of a $2$-ball $B^2$ relative to its boundary and this is one.
The first statement in (1) then follows because $s^0(F)$ is the number of interior lattice points and $s^2(F)$ is determined by the lattice volume of $F$, so we are done with (1).

Note that the subdivision $\check \shT$ of $\check\partial\Xi$ determines a subdivision of the fan of the toric variety $\PP_\Xi$ associated to $\Xi$ giving the MPCP resolution $\pi:\PP\ra\PP_\Xi$, see \cite[Thm. 2.2.24]{Ba94}.
By standard toric geometry, the $i$-dimensional torus orbits of $\PP_\Xi$ are indexed by $i$-faces $F\subset\Xi$, let $O_F$ be the orbit indexed by $F$. 
Similarly via the fan combinatorics, $i$-dimensional orbits of $\PP$ are indexed by $(3-i)$-dimensional cells $\tau$ of $\check\T$ and we denote the orbit corresponding to $\tau$ by $O_\tau$. We then have
\begin{equation} \label{inverseorbits}
\pi^{-1}(O_F)=\bigcup_{\tau\hbox{ \scriptsize meets the interior of }\check F}O_\tau.
\end{equation}
Note that for $F\subset\Xi$ and $i$-cell, $\check F$ is a $(3-i)$-cell.
Let $\bar Z=\pi(Z)$ denote the blowdown of $Z\subset\PP$ under $\pi$.
Note that $\bar Z$ is contained in the union of one- and two-dimensional torus orbits of $\PP_\Xi$ as it is the intersection of the singular locus of the boundary divisor with a general section of $\shO_{\PP_\Xi}(1)$.
These orbits thus correspond to edges and $2$-faces of $\Xi$.
In general, $\pi^{-1}({\bar Z})\neq Z$ since $Z$ is require to lie in the singular locus of the boundary divisor. Given $F\subset\Xi$ of dimension $1$ or $2$, we conclude from \eqref{inverseorbits-Z}
\begin{equation}
\label{inverseorbits-Z}
\pi^{-1}({\bar Z}\cap O_F)\cap Z=\bigcup_{{\tau\hbox{ \tiny meets the interior of }\check F}\atop{\dim O_\tau<3}}O_\tau.
\end{equation}
Note that $\dim O_\tau<3 \iff \dim\tau>0$.

When $\dim F=2$ then $\dim\check F=1$ and $\pi^{-1}({\bar Z}\cap O_F)$ is a disjoint union of $\len(\check F)$ copies of ${\bar Z}\cap O_F$.
Since $F$ is the Newton polytope of ${\bar Z}\cap O_F$ and this is triangulated in $s^2(F)$ triangles, we find ${\bar Z}\cap O_F$ decomposes in $s^2(F)$ many pairs of pants. We have deduced the first sum in (2).

When $\dim F=1$ then ${\bar Z}\cap O_F$ is a union of $\len(F)$ many point because $F$ is the Newton polytope of this set of points, let $p$ be one of these points.
We have $\dim\check F=2$ and $\pi^{-1}(p)$ is a union of $s^1(\check F)$ many $\PP^1$ that contains $s^2(\check F)$ zero-dimensional orbits (e.g. intersection points of them). We thus deduce the second sum in (2) and are done with (2).

Analogous to (2), the first sum in (3) counts the double points coming from the circles in a pair of pants decomposition of the components of $\bar Z$, as before these are multiplied by $\len(\check F)$ under taking $\pi^{-1}$. The second sum in (3) counts triple points of $Z$ that map to points in $\bar Z$.

(4) is obtained by applying the formula in (1) to the difference of (3) and (2).

Finally, the first summand in (5) counts components of $\pi(D_0)$ (these correspond to components of $D_0$ under pullback), the second sum gives the components of $D_0$ mapping to curves under $\pi$ and the last sum those that map to a point of $\pi(D_0)$.
\end{proof}

\begin{theorem} \label{mainthm-1}
The Euler number of $(Z,\shF_Z)$ coincides with that of a general hypersurface in $\PP$.
\end{theorem}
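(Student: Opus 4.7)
My plan is to express $e(Z,\shF_Z)$ and $\chi(X)$ as the same explicit combinatorial function of the reflexive polytope data $(\Xi,\check\Xi,\shT,\check\shT)$ and then match them term by term using reflexive duality.

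First I would combine Theorem~\ref{thm-euler-Z-ZF} with Theorem~\ref{mainthm-2} to write $e(Z,\shF_Z) = 2e-2n-2g$, and then apply the genus formula $g_i=d_i-c_i+1$ componentwise to the pair-of-pants degeneration of each $Z_i$ to obtain the symmetric form
\[
e(Z,\shF_Z)=2(\tilde e - d - n),
\]
which is twice Lemma~\ref{countZdata}(4). Invoking Lemma~\ref{countZdata}(1) in the form $s^2(F)-s^1(F)=1-s^0(F)$, together with parts (2) and (3), this unfolds into a bilinear expression summed over pairs consisting of a $2$-face of one polytope together with the lattice length of the dual edge of the other:
\[
e(Z,\shF_Z)\;=\;2\!\!\!\!\sum_{\check F\subset\check\Xi,\,\dim\check F=2}\!\!\!\!(s^0(\check F)-1)\len(F)\;-\;2\!\!\sum_{F\subset\Xi,\,\dim F=2}\!\!(s^0(F)-1)\len(\check F).
\]

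Next I would invoke Batyrev's formulas \cite{Ba94} for the Hodge numbers $h^{1,1}(X)$ and $h^{2,1}(X)$ of a smooth MPCP Calabi--Yau threefold $X\subset\PP$, and compute $\chi(X)=2(h^{1,1}(X)-h^{2,1}(X))$. In the difference, the facet ($3$-face) $\ell^*$ contributions cancel against parts of $\ell(\check\Xi)-\ell(\Xi)$ by the reflexive identification $\#\mathrm{vertices}(\Xi)=\#\mathrm{facets}(\check\Xi)$. The remaining bilinear $\ell^*(\check\Theta)\ell^*(\Theta)$ contributions on $2$-faces, after expanding $\ell^*(\text{edge})=\len(\text{edge})-1$, produce precisely the bilinear form obtained in Step~1, together with a residual linear correction involving only edge lengths and vertex counts. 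As a sanity check, both the quintic and the $(2,2,2,2)$ example of Example~\ref{examples} then reproduce $\chi(X)=-200$ and $-128$ respectively.

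The main obstacle will be showing that the residual linear correction vanishes. This reduces to the identity
\[
\#\mathrm{vertices}(\check\Xi)-\#\mathrm{edges}(\check\Xi)\;=\;\#\mathrm{vertices}(\Xi)-\#\mathrm{edges}(\Xi),
\]
which follows from Euler's formula $V-E+F_2-F_3=0$ for the boundary $3$-sphere of a $4$-polytope combined with the reflexive face duality $F_i(\Xi)=F_{3-i}(\check\Xi)$. Once this is in place, the two combinatorial expressions agree term by term, establishing $e(Z,\shF_Z)=\chi(X)$.
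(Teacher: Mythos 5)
Your reduction of $e(Z,\shF_Z)$ to $-2$ times the bilinear expression in Lemma~\ref{countZdata}\,(4) is exactly the paper's first step, but from there the two arguments diverge in which form of Batyrev's results they invoke and which combinatorial identity closes the gap. The paper quotes Batyrev's Euler number formula (\cite[Thm.~4.5.3]{Ba94}), $e(X)=2\bigl[\sum_{F}\vol(F)\len(\check F)-\sum_{\check F}\vol(\check F)\len(F)\bigr]$, converts $\vol(F)$ to $s^0(F)-1$ via Pick's theorem, and is left with showing $\sum_F b(F)\len(\check F)=\sum_{\check F}b(\check F)\len(F)$, which it does by expanding $b(F)=\sum_{e\subset\partial F}\len(e)$ and matching incident edge--face flags of $\Xi$ with those of $\check\Xi$ under face duality. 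You instead unpack $\chi(X)=2(h^{1,1}-h^{2,1})$ from the Hodge number formulas; after writing $\ell(\check\Xi)-\sum_{\text{facets}}\ell^*$ as $1+\#V+\sum_{\text{edges}}\ell^*+\sum_{2\text{-faces}}\ell^*$ and expanding $\ell^*(\text{edge})=\len-1$, the bilinear terms do match your expression for $e(Z,\shF_Z)$ exactly, and the residual is $[\#V(\check\Xi)-\#E(\check\Xi)]-[\#V(\Xi)-\#E(\Xi)]$, killed by $\chi(\partial\Xi)=\chi(S^3)=0$ together with $F_i(\Xi)=F_{3-i}(\check\Xi)$ --- incidentally the same identity the paper deploys in the proof of Theorem~\ref{mainthm-3}. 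I verified that your route closes: the only delicate point is getting the pairing in Batyrev's correction terms the right way around ($2$-faces of $\check\Xi$ paired with dual \emph{edges} of $\Xi$ in $h^{1,1}$, and vice versa in $h^{2,1}$); with the opposite assignment the bilinear parts come out with the wrong sign and the argument collapses, so spell this out when you write it up. Your description of the facet cancellation as happening ``against $\ell(\check\Xi)-\ell(\Xi)$ via $\#\mathrm{vertices}(\Xi)=\#\mathrm{facets}(\check\Xi)$'' is slightly off --- the facet interior points cancel within $\ell(\check\Xi)$ itself --- but this does not affect the substance. What your route buys is independence from Pick's theorem and from the flag-duality identity; what the paper's buys is a shorter path by citing the Euler number formula wholesale rather than re-deriving it from the Hodge numbers.
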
  
\begin{proof} 
By \cite[Thm. 4.5.3]{Ba94}, the Euler number of an anti-canonical hypersurface in $\PP$ is 
$$\sum_{{F\subset\Xi}\atop{\dim F=2}} \vol(F) \len(\check F) - \sum_{{\check F\subset\check\Xi}\atop{\dim \check F=2}}  \vol(\check F) \len(F) $$
multiplied by $2$ where $\vol(F)$ denotes the lattice volume of $F$. Pick's theorem states that
$$\vol(F) = s^0(F)+\frac{b(F)}2-1$$
where $b(F)$ denotes the number of lattice points in the boundary of $F$.
On the other hand, by Thm.~\ref{mainthm-2} and \eqref{eq-tildee}, the Euler number of $(Z,\shF_Z)$ is $(-2)$ times the expression in (4) of Lemma~\ref{countZdata}. It remains to show that
$$ 0 = \sum_{{F\subset\Xi}\atop{\dim F=2}} b(F) \len(\check F) - \sum_{{\check F\subset\check\Xi}\atop{\dim \check F=2}}  b(\check F) \len(F). $$
Indeed $b(F)$ coincides with the number of edges of $\shT$ contained in $\partial F$, so the last equation is equivalent to
$$ 0 = \sum_{{e\subset F\subset\Xi}\atop{\dim F=2,\dim e=1}} \len(e) \len(\check F) - \sum_{{\check e\subset \check F\subset\check\Xi}\atop{\dim \check F=2,\dim\check e=1}}  \len(\check e) \len(F) $$
where the two sums agree (up to sign) by duality.
\end{proof}

\begin{theorem} \label{mainthm-3}
$h^{p,q}(Z,\shF_Z)= h^{1-p,q}(\check Z,\shF_{\check Z}).$
\end{theorem}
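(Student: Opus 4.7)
The plan is to combine the explicit formulas of Theorem~\ref{mainthm-2} and Lemma~\ref{countZdata}, and to reduce the four asserted identities to the Euler characteristic of the boundary $3$-sphere of a reflexive $4$-polytope.

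First, by the Poincar\'e duality in Lemma-Definition~\ref{Poincare-pc}, both $(Z,\shF_Z)$ and $(\check Z,\shF_{\check Z})$ satisfy $h^{0,0}=h^{1,1}$ and $h^{1,0}=h^{0,1}$. Thus the four cases $(p,q)\in\{0,1\}^2$ of the target identity reduce to the two independent equalities
$$h^{0,0}(Z,\shF_Z) = h^{1,0}(\check Z,\shF_{\check Z})\quad\text{and}\quad h^{1,0}(Z,\shF_Z) = h^{0,0}(\check Z,\shF_{\check Z}).$$
Using Theorem~\ref{mainthm-2} and the antisymmetric identity
$$n_Z+g_Z-e_Z = -(n_{\check Z}+g_{\check Z}-e_{\check Z}),$$
which follows at once from Lemma~\ref{countZdata}(4) under the swap $\Xi\leftrightarrow\check\Xi$ (after replacing $g-e$ by $d-\tilde e$ via $g=d-\tilde e+e$), the two identities above become equivalent. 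It therefore suffices to prove
$$v_Z - v_{\check Z} = n_{\check Z}+g_{\check Z}-e_{\check Z}.$$

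Now expand $v_Z$ via Lemma~\ref{countZdata}(5) and $v_{\check Z}$ via its mirror, and rewrite the right-hand side as $n_{\check Z}+d_{\check Z}-\tilde e_{\check Z}$ via Lemma~\ref{countZdata}(4) applied to $\check Z$. The two weighted sums $\sum_{\dim\check F=2}s^0(\check F)\len(F)$ and $\sum_{\dim F=2}s^0(F)\len(\check F)$ appear identically on both sides and cancel. Substituting the elementary relation $s^0(F)=\len(F)-1$ valid for every $1$-face $F$, and using the tautological bijection between $1$-faces of $\Xi$ and $2$-faces of $\check\Xi$, which gives $\sum_{\dim F=1}\len(F)=\sum_{\dim\check F=2}\len(F)$ and dually, all remaining length sums cancel as well. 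What remains is the purely combinatorial identity
$$\#\{\text{vertices of }\check\Xi\}-\#\{\text{edges of }\check\Xi\} = \#\{\text{vertices of }\Xi\}-\#\{\text{edges of }\Xi\}.$$

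This identity is exactly the Euler characteristic computation for the boundary $3$-sphere of a reflexive $4$-polytope. Writing $\chi(\partial\Xi) = V_\Xi-E_\Xi+F_\Xi-F_{3,\Xi}=0$ and applying the inclusion-reversing face duality $V_\Xi=F_{3,\check\Xi}$, $E_\Xi=F_{\check\Xi}$, $F_\Xi=E_{\check\Xi}$, $F_{3,\Xi}=V_{\check\Xi}$ from \cite[Prop.~4.1.7]{Ba94} yields $V_\Xi-E_\Xi+E_{\check\Xi}-V_{\check\Xi}=0$, which is the desired equality. The main obstacle is not conceptual but combinatorial: the formulas in Lemma~\ref{countZdata} look asymmetric even though they are in fact balanced under $F\leftrightarrow\check F$, and one must carefully regroup the terms so that the systematic cancellation becomes visible; once this is done, the residual Euler-characteristic identity for a reflexive $4$-polytope supplies the essential input.
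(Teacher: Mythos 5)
Your proof is correct and follows essentially the same route as the paper: both reduce via Lemma~\ref{Poincare-pc} to a single identity, expand it using Theorem~\ref{mainthm-2} together with parts (4) and (5) of Lemma~\ref{countZdata} and the relation $s^0(F)=\len(F)-1$ for edges, and close the argument with the Euler characteristic of $\partial\Xi\cong S^3$ combined with the face duality of $\Xi$ and $\check\Xi$. The only cosmetic difference is that you obtain the second identity from the manifest antisymmetry of Lemma~\ref{countZdata}\,(4) under $\Xi\leftrightarrow\check\Xi$, whereas the paper invokes the symmetry of the mirror construction itself.
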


\begin{proof} By symmetry and Lemma~\ref{Poincare-pc}, it suffices to show $h^{0,1}(Z, \shF_Z) = h^{0,0}(\check Z, \shF_{\check Z})$. 
For this, we use \eqref{eq-tildee} where we insert (4) and (5) of Lemma~\ref{countZdata} and use that for an edge $\check F$ holds $s^0(\check F)=\len(\check F)-1$ to obtain
$$h^{0,1}(Z, \shF_Z)= \#\{\hbox{vertices of }\check\Xi\}+\sum_{{F\subset\Xi}\atop{\dim F=2}} (s^0(F)\len(\check F)-1) + \sum_{{\check F\subset\check\Xi}\atop{\dim \check F=2}}  \len(F).$$
We want to identify this with $h^{0,0}(\check Z, \shF_{\check Z})$ that takes the form 
$$h^{0,0}(\check Z, \shF_{\check Z})=\#\{\hbox{vertices of }\Xi\}+ \sum_{{\check F\subset\check\Xi}\atop{\dim \check F=2}} s^0( F) + \sum_{{F\subset \Xi}\atop{\dim F=2}} s^0( F)\len(\check F)$$
via Thm.~\ref{mainthm-2} and (5) of Lemma~\ref{countZdata}. For this one uses that for an edge $F$ holds $s^0(F)=\len(F)-1$ together with the identity
$$ \#\{\hbox{vertices of }\check\Xi\} + \sum_{{F\subset\Xi}\atop{\dim F=2}} (-1) =  \#\{\hbox{vertices of }\Xi\} + \sum_{{\check F\subset\check\Xi}\atop{\dim \check F=2}} (-1) $$
that can be derived from the computation of the vanishing Euler number of $\partial\Xi \cong S^3$ using the duality of faces of $\Xi$ and $\check\Xi$.
\end{proof}

\section{Appendix}
\label{appendix}
We add here the calculation of the Hodge diamonds in Examples~\ref{examples},\,(1)-(2) using Theorem~\ref{maintheorem}. By part (3) it suffices to compute the Hodge numbers of one mirror partner.
\begin{figure}
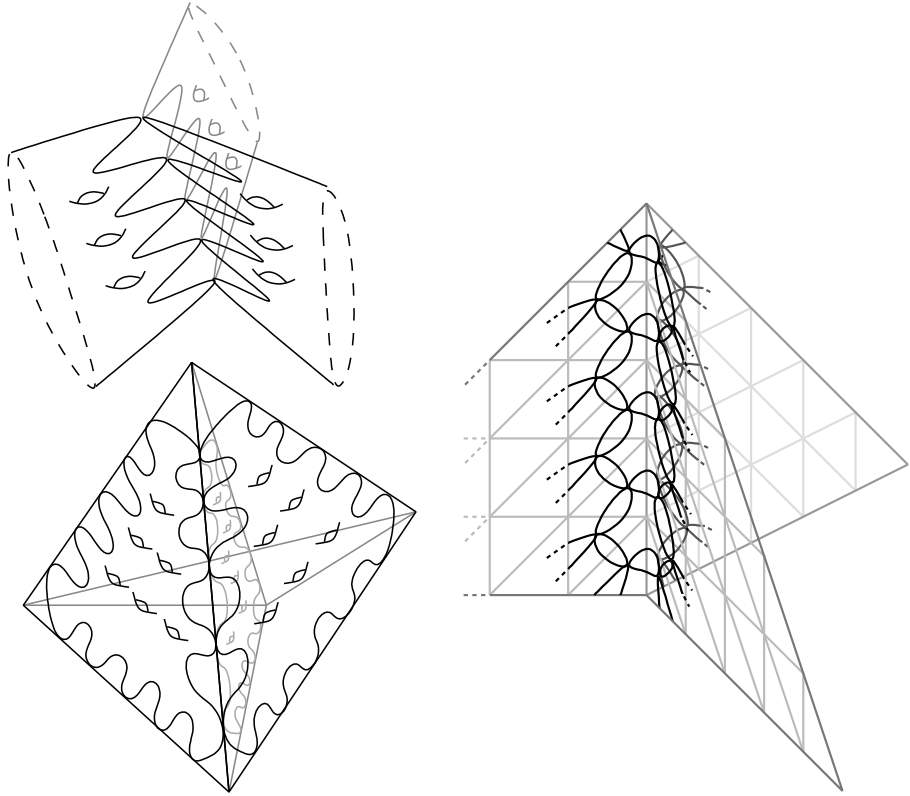

\resizebox{0.36\textwidth}{!}{
\input{quintic_curves.pstex_t}
}
\resizebox{0.4\textwidth}{!}{
\input{quintic-mirror.pstex_t}
}
\caption{Part of the perverse curve in the quintic threefold degeneration on the left and part of its mirror dual on the right.}
\label{quintic-pcs}
\end{figure}
Consider a general quintic threefold $X\subset\PP^4$ and let $D_0$ be its intersection with the toric boundary and $Z=\Sing D_0$. Note that the number of components of $D_0$ is $5 =:v$.
Then $Z$ consists of the union of $\binom53=10=:e$ smooth quintic plane curves (i.e. genus six) each given by the intersection of a coordinate $\PP^2$ with the quintic hypersurface. The sum of the genera is $g=60$.  
Since each of the $\binom52=10$ coordinate $\PP^1$s intersects the quintic in $5$ points and each coordinate $\PP^1$ is contained in $3$ coordinate $\PP^2$s, we conclude that the numbers of triple points is $n=50$. 
By Thm.~\ref{maintheorem},\,(2) we have
$$
\begin{array}{rcccccccl}
h^{1,0}(Z,\shF_Z) &=& h^{0,1}(Z,\shF_Z) &=& v-1+n+g-e &=& 5-1+50+60-10 &=& 104, \\
h^{0,0}(Z,\shF_Z) &=& h^{1,1}(Z,\shF_Z) &=& v-1 &=& 5-1 &=&4.$$ 
\end{array}
$$
We next consider a general anticanonical hypersurface $X$ in $(\PP^1)^4$ and let again $D_0$ be its intersection with the toric boundary. This has $v=2^1\binom41=8$ components each being an intersection of a toric prime divisor with $X$. The components of $Z=\Sing D_0$ are the intersection of the coordinate $(\PP^1)^2$s of which there are $e=2^2\binom42=24$. Each component of $Z$ is a genus one curve and the sum of their genera is thus $g=24$. The set of triple points $n$ in $Z$ is the number of intersection points of $X$ with the coordinate $\PP^1$s of which we have $e=2^3\binom43=32$ and each contributes two points, so $n=64$.
We conclude via Thm.~\ref{maintheorem},\,(2)
$$
\begin{array}{rcccccccl}
h^{1,0}(Z,\shF_Z) &=& h^{0,1}(Z,\shF_Z) &=& v-1+n+g-e &=& 8-1+64+24-24 &=& 71, \\
h^{0,0}(Z,\shF_Z) &=& h^{1,1}(Z,\shF_Z) &=& v-1 &=& 8-1 &=&7.$$ 
\end{array}
$$

\end{document}